\newcommand{\assign}{:=}
\newcommand{\comma}{{,}}
\newcommand{\mathd}{\mathrm{d}}
\newcommand{\nobracket}{}
\newcommand{\tmaffiliation}[1]{\\ #1}
\newcommand{\tmemail}[1]{\\ \textit{Email:} \texttt{#1}}
\newcommand{\tmop}[1]{\ensuremath{\operatorname{#1}}}
\newcommand{\tmtextbf}[1]{{\bfseries{#1}}}
\newcommand{\tmtextit}[1]{{\itshape{#1}}}
\newenvironment{proof}{\noindent\textbf{Proof\ }}{\hspace*{\fill}$\Box$\medskip}
\newenvironment{proof*}[1]{\noindent\textbf{#1\ }}{\hspace*{\fill}$\Box$\medskip}
\newtheorem{theorem}{Theorem}
\newtheorem{corollary}[theorem]{Corollary}
\newtheorem{lemma}[theorem]{Lemma}
{\theorembodyfont{\rmfamily}\newtheorem{remark}[theorem]{Remark}}
\newcommand{\textdots}[0]{...}
\begin{document}

\title{
  A note on supersymmetry and  stochastic differential equations
}

\author{Francesco C. De Vecchi and Massimiliano Gubinelli\\
  \tmaffiliation{
  Institute for Applied Mathematics \&\\
  Hausdorff Center for Mathematics\\
  University of Bonn, Germany}\\
  \tmemail{fdevecch@uni-bonn.de}
  \tmemail{mgubinel@uni-bonn.de}
}

\date{December 2019}

\maketitle

\begin{abstract}
  We obtain a dimensional reduction result for the law of a class of
  stochastic differential equations using a supersymmetric representation
  first introduced by Parisi and Sourlas.
\end{abstract}

\section{Introduction}

In this paper we want to exploit a supersymmetric representation of scalar
stochastic differential equations~(SDEs) with additive noise and nonlinear
drift $V'$ in order to prove the well known relation between the invariant law
of these SDEs and the Gibbs measure $e^{- 2 V (x)} \mathd x$.

The supersymmetric representation of SDEs or more generally SPDEs was first
noted by Parisi and
Sourlas~{\cite{parisi_random_1979,parisi_supersymmetric_1982}} and it is well
known and used in the physics literature (see, e.g.~{\cite{Zinn1993}}) where
the relation between supersymmetry, SDEs and Gibbs measures (called
dimensional reduction) was formally
established~{\cite{damgaard_stochastic_1988,Gozzi1984}}. In the case of
elliptic SPDEs these formal arguments have been rigorously exploited and
proved~{\cite{albeverio_elliptic_2018,Klein1984}} and applied to stochastic
quantization program of quantum field theory~{\cite{AlDeGu2019,GuHof2019}}. In
the present paper we want to propose a similar rigorous version of dimensional
reduction for one dimensional SDEs. The proof proposed here follows more
closely the methods used for dimensional reduction of elliptic equations used
in~{\cite{albeverio_elliptic_2018}} (see also~{\cite{Klein1984}}) rather then
the formal proofs of the physics literature (see,
e.g.~{\cite{damgaard_stochastic_1988,Gozzi1984}})

\

More precisely, here we consider the following SDE
\begin{equation}
  \partial_t \phi (t) + m^2 \phi (t) + f (t) V' (\phi (t)) = \xi (t)
  \label{eq:SDEmain}, \qquad t \in \mathbb{R},
\end{equation}
where $m > 0$, $f : \mathbb{R} \rightarrow \mathbb{R}_+$ is a compactly
supported positive even smooth function such that $f (0) = 1$, $V : \mathbb{R}
\rightarrow \mathbb{R}$ is a smooth bounded function with all derivatives
bounded and $\xi$ is a white noise on $\mathbb{R}$. Eq.~{\eqref{eq:SDEmain}}
has a unique solution $\phi : \mathbb{R} \rightarrow \mathbb{R}$ which
coincides for sufficiently negative times with the Ornstein--Uhlenbeck process
$\varphi = \mathcal{G} \ast \xi$ where
\[ \mathcal{G} (t) = e^{- m^2 t} \mathbb{I}_{t > 0} . \]
This solution satisfies the integral equation
\begin{equation}
  \phi (t) + \mathcal{G} \ast (f V' (\phi)) (t) = \varphi (t),
  \label{eq:main2} \qquad t \in \mathbb{R},
\end{equation}
and moreover its law is invariant under the inversion $t \mapsto - t$ of the
time variable.

\

The aim of this note is to prove the following theorem.

\begin{theorem}
  \label{theorem_main}For any bounded measurable function $F : \mathbb{R}
  \rightarrow \mathbb{R}$ we have
  \[ \mathbb{E} \left[ F (\phi (0)) e^{- 2 \int_{- \infty}^0 f' (t) V (\phi
     (t)) \mathd t} \right] = \frac{\mathcal{} 1}{\mathcal{Z}} \left.
     \int_{\mathbb{R}} F (x) e^{- m^2 x^2 - 2 V (x)} \mathd x \right. \]
  where
  \[ \mathcal{Z}= \frac{\mathbb{E} \left[ e^{- 2 \int_{- \infty}^0 f' (t) V
     (\phi (t)) \mathd t} \right]}{\int_{\mathbb{R}} e^{- m^2 x^2 - 2 V (x)}
     \mathd x} . \]
\end{theorem}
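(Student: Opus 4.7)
The overall strategy is Parisi--Sourlas dimensional reduction: the SDE \eqref{eq:SDEmain} admits a path-integral representation with a hidden supersymmetry, and this supersymmetry forces one-point correlation functions at $t=0$ to reduce to a one-dimensional Gibbs integral.

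\textbf{Step 1: pull back to the OU measure.} I would begin with a Girsanov change of drift on a large interval $[-T,T]$ containing the support of $f$. The law of $\phi$ becomes absolutely continuous with respect to the OU process $\varphi = \mathcal{G}\ast\xi$. The crucial pathwise identity is the integration by parts
\[
  \int_{-\infty}^0 f'(t) V(\phi(t))\,\mathd t = V(\phi(0)) - \int_{-\infty}^0 f(t) V'(\phi(t))\, \phi'(t)\, \mathd t,
\]
which uses $f(0)=1$ and $f(-\infty)=0$, so that the exponential weight in the theorem already contributes an explicit boundary factor $e^{-2V(\phi(0))}$---precisely the $e^{-2V(x)}$ that appears on the right-hand side. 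The remaining part of the weight, combined with the Radon--Nikodym density, produces a functional of $\varphi$ involving only $V'$, $V''$ and the It\^o stochastic integral $\int fV'(\varphi)\,\mathd\varphi$, which I would convert to deterministic integrals via It\^o's formula.

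\textbf{Step 2: supersymmetric reformulation.} I would then represent the Jacobian $\det(\partial_t + m^2 + fV''(\phi))$ of the map $\phi\mapsto\varphi = \phi + \mathcal{G}\ast(fV'(\phi))$ as a Berezin integral over a Grassmann ghost pair $(\psi,\bar\psi)$. Combined with the Gaussian weight for $\varphi$, this yields a formal path integral whose action is invariant under a supersymmetry transformation mixing $\varphi$ with its fermionic partners, precisely in the form studied in \cite{albeverio_elliptic_2018,Klein1984}. The remaining observable from Step~1 becomes a superfield observable evaluated at the super-origin.

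\textbf{Step 3: dimensional reduction.} The Parisi--Sourlas--Klein argument then collapses the supersymmetric path integral onto the value at the single time $t=0$: the non-zero modes of the bosonic and fermionic fields cancel pairwise, leaving the Gaussian factor $e^{-m^2 x^2}\mathd x$ inherited from the OU stationary measure at $t=0$. Combining this with the $e^{-2V(x)}$ factor from Step~1 yields $\mathcal{Z}^{-1}\int F(x) e^{-m^2 x^2 - 2V(x)}\mathd x$, with $\mathcal{Z}$ absorbing the mismatch between the two normalisations, as claimed.

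\textbf{Main obstacle.} The delicate point is the rigorous justification of the Grassmann manipulations and of the Parisi--Sourlas reduction in infinite dimensions. Following \cite{albeverio_elliptic_2018}, I expect the cleanest route is via a finite-dimensional time discretisation, in which the reduction becomes an algebraic identity about finite determinants and Gaussian integrals, followed by a controlled passage to the continuum limit. Compact support of $f$, boundedness of $V$ and all its derivatives, and the mass $m>0$ providing exponential decay of the OU covariance should all be essential in making this limit tractable.
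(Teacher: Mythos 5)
Your overall architecture --- Girsanov reduction to the law of the Ornstein--Uhlenbeck process, a supersymmetric rewriting of the resulting Radon--Nikodym density, and a Parisi--Sourlas localization --- is exactly the strategy of the paper, and your Steps 1 and 2 are essentially sound. The integration by parts extracting $e^{-2V(\phi(0))}$ is a legitimate Stratonovich chain rule (the paper does not perform it: there the boundary term $-2V(\Phi(0))$ emerges only at the very end, from the localization formula \eqref{eq:key}), and your ghost-determinant representation of the Jacobian of $\phi\mapsto\varphi=\phi+\mathcal{G}\ast(fV'(\phi))$ is equivalent to the paper's Gaussian super-field with covariance \eqref{eq:covariance1}, cf.\ Theorem~\ref{theorem_limit2}.

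The genuine gap is in Step 3. The super-field $\Phi$ attached to a (parabolic, causal) SDE is \emph{not} supersymmetric: the covariance \eqref{eq:covariance1} is annihilated by $Q,\bar Q$ only in the ``later'' of the two superspace points, unlike the elliptic case where the covariance is a function of $|x-y|^2+4(\theta-\theta')(\bar\theta-\bar\theta')$ in both arguments. Consequently the standard reduction theorem (Theorem~\ref{theorem_supersymmetry1}) does not apply to the expectation in \eqref{eq:integral2}, and the asserted ``pairwise cancellation of non-zero bosonic and fermionic modes'' simply does not happen for this field; this is precisely the point where the present problem departs from the elliptic one you invoke. The paper's substitute is Lemma~\ref{lemma_superprobability}: time-order the $\ell$-fold integral, apply Wick's theorem together with the one-sided supersymmetry of the covariance in the latest time variable, and use the Markov/causality identity $\mathcal{G}(t-s)\mathcal{G}(s-t)=0$ to run an induction on $\ell$ that collapses the ordered integral onto the boundary point, yielding Theorem~\ref{theorem_localization}. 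Without an argument playing this role, your Step 3 is an assertion rather than a proof. A secondary issue: your proposed finite-dimensional time discretization must confront the It\^o--Stratonovich ambiguity head on; the equal-time contractions $\langle\bar\psi(t)\psi(t)\rangle=\langle\varphi(t)\omega(t)\rangle=\tfrac12$ that make the cancellations in Lemma~\ref{lemma_superprobability} come out right are only obtained from a \emph{symmetric} regularization (see the remark following \eqref{eq:epsilon}), and a forward (It\^o-type) discretization would give a different, wrong limit.
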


\begin{proof*}{Proof}
  Let $\mu_{\varphi}$ be the law of the Gaussian field $\varphi = \mathcal{G}
  \ast \xi$ on the space $C (\mathbb{R}; \mathbb{R})$ endowed with the
  topology of uniform convergence on bounded intervals. Girsanov theorem
  implies that for any measurable bounded function $F : \mathbb{R} \rightarrow
  \mathbb{R}$
  \begin{equation}
    \mathbb{E} \left[ F (\phi (0)) e^{- 2 \int_{- \infty}^0 f' (t) V (\phi
    (t)) \mathd t} \right] = \int F (\varphi (0)) \exp (S (\varphi))
    \mu_{\varphi} (\mathd \varphi) . \label{eq:integral}
  \end{equation}
  with
  
\begin{multline*} S (\varphi) = \int_{- \infty}^0 \left[ \frac{f (t)}{2} V''
     (\varphi (t)) - \frac{1}{2} (f (t) V' (\varphi (t)))^2 - 2 f' (t) V (\phi
     (t)) \right] \mathd t+\\
 - \int_{- \infty}^0 f (t) V' (\varphi (t)) \circ
     \mathd B (t) .\end{multline*}
  
  Here $(B (t))_{t \in \mathbb{R}}$ is the double sided Brownian motion
  (adapted with respect to $\varphi$) such that $\partial_t B = \xi =
  (\partial_t + m^2) \varphi$ and $\circ \mathd B$ denotes the corresponding
  Stratonovich integral.
  
  Parisi and Sourlas~{\cite{parisi_supersymmetric_1982}} observed long ago
  that the r.h.s. of eq.~{\eqref{eq:integral}} admits a representation using a
  Gaussian super-field $\Phi$ defined on the superspace $(t, \theta,
  \bar{\theta})$ where $t$ is the usual time variable and $\theta,
  \bar{\theta}$ are two Grassmann variables playing the role of additional
  ``fermionic'' spatial coordinates (see Section~\ref{sec:supersymm} for the
  necessary notions and notations). For the moment let us simply remark that
  $\Phi$ can be rigorously constructed as a random field on a non-commutative
  probability space with expectation denoted by $\langle \cdot \rangle$ in
  such a way that expectation of polynomials in $\Phi$ can be reduced, via an
  analog of Wick's theorem, to linear combinations of products of covariances.
  If the covariance of the super-field has the form
  \begin{equation}
    \langle \Phi (t, \theta, \bar{\theta}) \Phi (s, \theta', \bar{\theta}')
    \rangle = \frac{1}{2 m^2} \mathcal{G} (| t - s |) + \mathcal{G} (t - s)
    (\theta' - \theta) \bar{\theta}' - \mathcal{G} (s - t) (\theta' - \theta)
    \bar{\theta} \label{eq:covariance1},
  \end{equation}
  then we will prove in Theorem~\ref{theorem_limit2} below that the following
  representation formula holds
  \begin{equation}
    \int F (\varphi (0)) \exp (S (\varphi)) \mu_{\varphi} (\mathd \varphi) =
    \left\langle F (\Phi (0)) \exp \left( \int_{- \infty}^0 f (t + 2 \theta
    \bar{\theta}) V (\Phi (t, \theta, \bar{\theta})) \mathd t \mathd \theta
    \mathd \bar{\theta} \right) \right\rangle . \label{eq:integral2}
  \end{equation}
  Note that in the l.h.s. we have usual (commutative) probabilistic objects
  while the r.h.s. is expressed in the language of non-commutative
  probability.
  
  The interest of this reformulation lies in the fact that on the superspace
  $(t, \theta, \bar{\theta})$ one can define supersymmetric transformations
  which preserve the quantity $t + 2 \theta \bar{\theta}$. Integrals of
  supersymmetric quantities satisfy well known localization (also called
  dimensional reduction)
  formulas~{\cite{brydges_branched_2003,helmuth_dimensional_2016,brydges_dimensional_2003,Klein1984,Zaboronsky1997}}
  which express integrals over the superspace as evaluations in zero, more
  precisely if $F = f (t + 2 \theta \bar{\theta}) \in \mathcal{S}
  (\mathfrak{S})$ is a supersymmetric function and $T \in \mathcal{S}'
  (\mathfrak{S})$ is a supersymmetric distribution we have that
  \[ \int_{- \infty}^K T (t, \theta, \bar{\theta}) \cdot F (t, \theta,
     \bar{\theta}) \mathd t \mathd \theta \mathd \bar{\theta} = - 2
     T_{\emptyset} (K) F_{\emptyset} (K) \]
  for any $K \in \mathbb{R}$ (see Theorem~\ref{theorem_supersymmetry1} for a
  precise statement).
  
  We cannot apply Theorem~\ref{theorem_supersymmetry1} directly to
  expression~{\eqref{eq:integral2}} since the superfield $\Phi$ is not
  supersymmetric. On the other hand the correlation
  function~{\eqref{eq:covariance1}} is supersymmetric with respect to $(s,
  \theta', \bar{\theta}')$ when $t \geqslant s$ and with respect to $(t,
  \theta, \bar{\theta})$ when $t \leqslant s$. This property and the
  Markovianity of the kernel $\mathcal{G}$, namely
  \[ \mathcal{G} (t - s) \mathcal{G} (s - t) = 0 \]
  when $s \not = t$, allows us to prove a localization property for the
  \tmtextit{expectation of \ supersymmetric linear functionals of $\Phi$} (see
  Theorem~\ref{theorem_localization}), namely we prove that
  \[ \left\langle F (\Phi (0)) \exp \left( \int_{- \infty}^0 f (t + 2 \theta
     \bar{\theta}) V (\Phi (t, \theta, \bar{\theta})) \mathd t \mathd \theta
     \mathd \bar{\theta} \right) \right\rangle = \langle F (\Phi (0)) \exp [-
     2 V (\Phi (0))] \rangle . \]
  Since $\Phi (0)$ is distributed as a Gaussian with mean $0$ and variance $2
  m^{- 2}$ this implies the claim.
\end{proof*}

The rest of the paper contains details on the definition of the super-fields
and the proofs of the intermediate results.

\

\tmtextbf{Acknowledgement.} The authors are funded by the DFG under Germany's Excellence Strategy - GZ 2047/1, Project-ID 390685813.  The second author is supported by DFG via CRC 1060.

\section{Super-geometry and Gaussian super-fields}\label{sec:supersymm}

\subsection{Some notions of super-geometry}

We denote by $\mathfrak{S}$ an infinite dimensional Grassmannian algebra
generated by an enumerable number of free generators $\{ 1, \theta_1,
\theta_2, \ldots, \theta_n, \ldots \}$. With this we means that any element of
$\Theta \in \mathfrak{S}$ can be written in a unique way using a finite number
of sum and products between the generators $\theta_i$. The product between
$\theta_i$ is anti-commuting which means that $\theta_i \theta_j = - \theta_j
\theta_i$ and they commute with $1$. We call $\mathfrak{S}_0 = \tmop{span} \{
1 \}$, $\mathfrak{S}_1 = \tmop{span} \{ \theta_1, \theta_2, \ldots, \theta_n,
\ldots \}$ and with $\mathfrak{S}_k = \tmop{span} \{ \theta_{i_1} \cdots
\theta_{i_k} | \theta_{i_j} \in \mathfrak{S}_1 \}$.

If $\theta_1, \ldots, \theta_h \in \mathfrak{S}_1$ we denote by $\mathfrak{S}
(\theta_1, \ldots, \theta_h)$ the finite dimensional Grassmannian sub-algebra
of $\mathfrak{S}$ generated by $\{ 1, \theta_1, \ldots, \theta_h \}$, and we
denote by $\mathfrak{O}_h$ the universal Grassmannian algebra generated by $h$
elements. We suppose that there is an order between the generators of
$\mathfrak{O}_h$. Once we fixed an order between $\theta_1, \ldots, \theta_h$
there is a natural isomorphism between $\mathfrak{O}_h$ and $\mathfrak{S}
(\theta_1, \ldots, \theta_h)$.

We can define a notion of smooth function $F : \mathbb{R}^n \times
\mathfrak{S}_1^h \rightarrow \mathfrak{S}$. Let $\tilde{F}$ be a smooth
function from $\mathbb{R}^n$ taking values in $\mathfrak{O}_h$ which means an
object of the form
\[ \tilde{F} (x) = \widetilde{F_{}}_{\emptyset} (x) 1 + \sum_{i = 1}^h
   \tilde{F}_i (x) \mathfrak{t}_i + \sum_{1 \leqslant i < j \leqslant h}
   \tilde{F}_{i, j} (x) \mathfrak{t}_i \mathfrak{t}_j + \cdots + \tilde{F}_{1,
   2, \ldots, h} (x) \mathfrak{t}_1 \cdots \mathfrak{t}_h . \]
We define $F$ associated with $\tilde{F}$ in the following way: $F$ associates
to $(x, \theta_1, \ldots, \theta_h) \in \mathbb{R}^n \times \mathfrak{S}_1^h$
the element $\tilde{F} (x) \in \mathfrak{S} (\theta_1, \ldots, \theta_h)$
where we make the identification of $\mathfrak{S} (\theta_1, \ldots,
\theta_h)$ with $\mathfrak{O}_h$, i.e.
\[ F (x, \theta_1, \ldots, \theta_h) = \widetilde{F_{\emptyset}}_{} (x) 1 +
   \sum_{i = 1}^h \tilde{F}_i (x) \theta_i + \sum_{1 \leqslant i < j \leqslant
   h} \tilde{F}_{i, j} (x) \theta_i \theta_j + \cdots + \tilde{F}_{1, 2,
   \ldots, h} (x) \theta_1 \cdots \theta_h . \]
Hereafter we use the notation $F_{\emptyset}, F_{\theta_1}, \ldots$ for
denoting $F_{\emptyset} = \tilde{F}_{\emptyset}, F_{\theta_1} = \tilde{F}_1,
\ldots$ .We say that $F$ is a Schwartz function if $F_{\emptyset}, F_i,
\ldots$ are Schwartz functions. We denote by $\mathcal{S} (\mathfrak{S}^h)$
the set of Schwartz functions with $h$ anti-commuting variables.

If $H : \mathbb{R} \rightarrow \mathbb{R}$ is a smooth function we can define
the composition $H \circ F$ in the following way
\[ \begin{array}{ll}
     H \circ F (x, \theta_1, \ldots, \theta_h) = & H (F_{\emptyset} (x)) 1 +
     H' (F_{\emptyset} (x)) (F (x, \theta_1, \ldots, \theta_h) - F_{\emptyset}
     (x) 1)\\
     & + \frac{1}{2} H'' (F_{\emptyset} (x)) (F (x, \theta_1, \ldots,
     \theta_h) - F_{\emptyset} (x) 1)^2\\
     & \cdots + \frac{1}{h!} H^{(h)} (F_{\emptyset} (x)) (F (x, \theta_1,
     \ldots, \theta_h) - F_{\emptyset} (x) 1)^h .
   \end{array} \]
On $\mathfrak{S}$ is possible to define a notion of integral called Berezin
integral, in the following way $\int \theta \mathd \theta_{} = 1$, $\int
\bar{\theta} \mathd \theta = 0$ if $\bar{\theta} \in \mathfrak{S}_1$ and
$\bar{\theta} \not = \theta,$ $\int \Theta \theta \mathd \theta = \Theta$
where $\Theta = \theta_1 \cdots \theta_h \in \mathfrak{S}_h$ and $\theta_i
\not = \theta$ and $\int \cdot \mathd \theta$ is linear in its argument. The
integral $\int \Theta \mathd \theta_1 \mathd \theta_2 \cdots \mathd \theta_h$
is defined as $\int \left( \cdots \left( \int \left( \int \Theta \mathd
\theta_1 \right) \mathd \theta_2 \right) \cdots \right) \mathd \theta_h$.

If $F$ is a smooth function we can define the integral of $F$ with respect to
$\mathd \theta_1 \cdots d \theta_h$ in the following way $\int F (x, \theta_1,
\ldots, \theta_h) \mathd x \mathd \theta_1 \cdots \mathd \theta_h$ first
applying the integral $\int \cdot \mathd x$ to $F_{\emptyset}, F_i, \ldots$
obtaining an element of $\mathfrak{S} (\theta_1, \ldots, \theta_h)$ and then
applying the Berezin integral to this result. Using this notion of integral
and the induced duality between smooth functions, it is possible to define the
notion of tempered distribution $T \in \mathcal{S}' (\mathfrak{S}^h)$. The
distribution $T$ is an object of the form
\[ T (x, \theta_1, \ldots, \theta_k) = T_{\emptyset} (x) 1 + \sum_{i = 1}^h
   T_{\theta_i} (x) \theta_i + \cdots + T_{\theta_1 \ldots \theta_h} (x)
   \theta_1 \cdots \theta_h \]
where $T_{\emptyset} (x)$, $T_{\theta_i} (x)$, {\textdots}, $T_{\theta_1
\ldots \theta_h} (x)$ are Schwartz distributions.

\subsection{Construction of the super-field}

Following the analogous construction
in~{\cite{Klein1984,albeverio_elliptic_2018}} the super-field $\Phi$ is
defined as
\[ \Phi (t, \theta, \bar{\theta}) = \varphi (t) + \bar{\psi} (t) \theta + \psi
   (t) \bar{\theta} + \omega (t) \theta \bar{\theta}, \]
where $\varphi, \psi, \bar{\psi}, \omega$ are complex Gaussian fields realized
as functional from $\mathcal{S} (\mathbb{R})$ into the set of operators
$\mathcal{O} (\mathfrak{H})$ on a complex vector space $\mathfrak{H}$ with a
fixed state $\Omega$ (hereafter we denote by $\langle a \rangle_{\Omega} =
\langle \Omega, a (\Omega) \rangle_{\mathfrak{H}}$ for any $a \in \mathcal{O}
(\mathfrak{H})$), and $\theta, \bar{\theta}$ are any pair of anti-commuting
variables $\theta, \bar{\theta} \in \mathfrak{S}$ commuting with the operators
$\omega, \varphi$ and anti-commuting with the operators $\psi, \bar{\psi}$.

The Gaussian fields $\varphi, \psi, \bar{\psi}, \omega$ mush be realized as
operators defined from $\mathcal{S} (\mathbb{R})$ taking values in
$\mathcal{O} (\mathfrak{H})$, for a suitable Hilbert space $\mathfrak{H}$ with
a state $\Omega \in \mathfrak{H}$ such that the condition
{\eqref{eq:covariance1}} holds. Making a formal computation we obtain that
\begin{eqnarray}
  \langle \Phi (t, \theta, \bar{\theta}) \Phi (s, \theta', \bar{\theta}')
  \rangle_{\Omega} & = & \langle \varphi (t) \varphi (s) \rangle_{\Omega} -
  \langle \bar{\psi} (t) \psi (s) \rangle_{\Omega} \theta \bar{\theta}' -
  \langle \psi (t) \bar{\psi} (s) \rangle_{\Omega} \bar{\theta} \theta' +
  \nonumber\\
  &  & + \langle \varphi (t) \omega (s) \rangle_{\Omega} \theta'
  \bar{\theta}' + \langle \omega (t) \varphi (s) \rangle_{\Omega} \theta
  \bar{\theta} + \langle \omega (t) \omega (s) \rangle_{\Omega} \theta
  \bar{\theta} \theta' \bar{\theta}' \nonumber
\end{eqnarray}
from which we get
\begin{equation}
  \begin{array}{ccc}
    \langle \varphi (t) \varphi (s) \rangle_{\Omega} = \frac{1}{2 m^2}
    \mathcal{G} (| t - s |) & \langle \bar{\psi} (t) \psi (s) \rangle_{\Omega}
    = \mathcal{G} (t - s) & \langle \varphi (t) \omega (s) \rangle_{\Omega} =
    \mathcal{G} (t - s)\\
    & \langle \omega (t) \omega (s) \rangle_{\Omega} = 0. & 
  \end{array} \label{eq:covariance2}
\end{equation}
Using the commutation relations
\begin{equation}
  \begin{array}{lcl}
    \{ \varphi (t), \varphi (s) \}_+ = 0 & \{ \varphi (t), \omega (s) \}_+ = 0
    & \{ \omega (t), \omega (s) \}_+ = 0
  \end{array} \label{eq:commutator1}
\end{equation}
\begin{equation}
  \begin{array}{c}
    \{ \bar{\psi} (t), \psi (s) \}_- = \{ \psi (t), \psi (s) \}_- = \{
    \bar{\psi} (t), \bar{\psi} (s) \}_- = 0\\
    \{ \varphi (t) \psi (s) \}_+ = \{ \varphi (t) \bar{\psi} (s) \}_+ = \{
    \omega (t), \psi (s) \}_+ = \{ \omega (t), \bar{\psi} (s) \}_+ = 0
  \end{array} \label{eq:commutator2}
\end{equation}
where $\{ K_1, K_2 \}_+ = K_1 K_2 - K_2 K_1$ and $\{ K_1, K_2 \}_- = K_1 K_2 +
K_2 K_1$ (where $K_1, K_2 \in \mathcal{B} (\mathfrak{H})$) are the commutator
and the anti-commutator of closed operators having a non void common core. By
Wick theorem (see, e.g.~{\cite{Fetter2012}}~Chapter~3~Section~8) the
expectation of arbitrary polynomials in $\varphi, \psi, \bar{\psi}, \omega$ is
completely determined.

\

The bosonic field $\varphi$ is a standard (real and commutative) Gaussian
field with covariance $\mathcal{G} (| t - s |)$. Also $\omega$ is a standard
(complex and commutative) Gaussian field of the form
\[ \omega (t) = \xi (t) + i \eta (t), \]
where $\xi = (\partial_t + m^2) \varphi$ and $\eta$ is a Gaussian white noise
with Cameron-Martin space $L^2 (\mathbb{R})$ independent of $\varphi$. We can
realize the Gaussian field $\varphi, \omega$ as (unbounded) operators defined
on a Hilbert space $\mathfrak{\mathfrak{H}_{\varphi, \omega}}$ and with a
state $\Omega_{\varphi, \omega}$. We can take $\mathfrak{H}_{\varphi, \omega}
= L^2 (\mu_{\varphi, \omega})$ where $\mu_{\varphi, \omega}$ is the law of
$(\varphi, \omega)$ on $C (\mathbb{R}) \times \mathcal{S}'_{\mathbb{C}}
(\mathbb{R})$ and $\Omega_{\varphi, \omega} = 1$.

\

The fermionic fields $\psi, \bar{\psi}$ are build as follows. Let $a, b$ and
$a^{\ast}, b^{\ast}$ be two construction and annihilation operators defined as
bounded functional on $\mathcal{S} (\mathbb{R})$ taking value in $\mathcal{B}
(\mathfrak{H}_{\psi, \bar{\psi}})$ (where $\mathfrak{H}_{\psi, \bar{\psi}}$ is
a suitable Hilbert space with a fixed state $\Omega_{\psi, \bar{\psi}}$) such
that
\begin{eqnarray}
  & \{ a (f), a (g)^{} \}_- = \{ b (f), b (g)^{} \}_- = 0 &  \nonumber\\
  & \{ a (f), b (g)^{} \}_- = \{ a^{\ast} (f), b (g)^{} \}_- = 0 & 
  \nonumber\\
  & \{ a^{\ast} (g), a (f) \}_- = \{ b^{\ast} (g), b (f) \}_- = \left(
  \int_{\mathbb{R}} f (t) g (t) \mathd t \right) I_{\mathfrak{H}_{\psi,
  \bar{\psi}}}, &  \nonumber
\end{eqnarray}
for any $f, g \in \mathcal{S} (\mathbb{R})$, and such that
\[ \langle a (f) K \rangle_{\Omega_{\psi, \bar{\psi}}} = \langle K a^{\ast}
   (f) \rangle_{\Omega_{\psi, \bar{\psi}}} = \langle b (f) K
   \rangle_{\Omega_{\psi, \bar{\psi}}} = \langle K b^{\ast} (f)
   \rangle_{\Omega_{\psi, \bar{\psi}}} = 0, \]
where $K$ is any bounded operator $K \in \mathcal{B} (\mathfrak{H}_{\psi,
\bar{\psi}})$. We define $\mathcal{U} : \mathcal{S} (\mathbb{R}) \rightarrow
\mathcal{S} (\mathbb{R})$ as
\[ \mathcal{U} (f) (t) = \frac{1}{2 \pi} \int_{\mathbb{R}} \frac{e^{- i \xi
   t}}{i \xi + m^2} \hat{f} (\xi) \mathd \xi . \]
We then write
\[ \psi (f) = a^{\ast} (\mathcal{U}^{\ast} (f)) + b (f), \quad \bar{\psi} (f)
   = b^{\ast} (\mathcal{U} (f)) - a (f), \]
where $\mathcal{U}^{\ast}$ is the adjoint of $\mathcal{U}$ with respect to the
Lebesgue measure on $\mathbb{R}^2$. In this way, we have
\[ \{ \bar{\psi} (t), \psi (s) \}_- = \{ \psi (t), \psi (s) \}_- = \{
   \bar{\psi} (t), \bar{\psi} (s) \}_- = 0, \]
and also
\[ \begin{array}{rl}
     \langle \bar{\psi} (f_{}) \psi (g) \rangle_{\Omega_{\psi, \bar{\psi}}} =
     & \langle b^{\ast} (f) a^{\ast} (g) \rangle_{\Omega_{\psi, \bar{\psi}}} +
     \langle b^{\ast} (f) b (g) \rangle_{\Omega_{\psi, \bar{\psi}}} - \langle
     a (f) a^{\ast} (g) \rangle_{\Omega_{\psi, \bar{\psi}}}+\\
     & - \langle a (f) b (g) \rangle_{\Omega_{\psi, \bar{\psi}}} =
     \int_{\mathbb{R}} \mathcal{U} (f) (t) g (t) \mathd t\\
     = & \int_{\mathbb{R}} g (t) \int_{- \infty}^t e^{- m^2 (t - s)} f (s)
     \mathd s \mathd t = \int_{\mathbb{R}^2} g (t) \mathcal{G} (t - s) f (s)
     \mathd s \mathd t.
   \end{array} \]
In other words we have $\langle \bar{\psi} (t) \psi (s) \rangle_{\Omega_{\psi,
\bar{\psi}}} = \mathcal{G} (t - s)$ as required. We can define the operators
$\varphi, \psi, \bar{\psi}, \omega$ on a unique (quantum) probability space
taking
\[ \mathfrak{H}=\mathfrak{H}_{\varphi, \omega} \otimes \mathfrak{H}_{\psi
   \comma \bar{\psi}} \qquad \Omega = \Omega_{\varphi, \omega} \otimes
   \Omega_{\psi, \bar{\psi}} . \]
In order to realize the field $\Phi$ in a rigorous way we consider a complex
sub-algebra $\mathfrak{A} \subset \mathcal{O} (\mathfrak{H})$ such that
$\varphi, \psi, \bar{\psi}, \omega$ takes values in $\mathfrak{A}$ and for any
smooth function $V : \mathbb{R} \rightarrow \mathbb{R}$ we have $V (\varphi
(g)) \in \mathfrak{A}$, where $g$ is any function in $\mathcal{S}
(\mathbb{R})$. This sub-algebra is generated (from an algebraic point of view)
by operators of the form $V (\varphi (g))$, $\omega (g)$, $\psi (g)$,
$\bar{\psi} (g)$ and $I_{\mathfrak{H}}$. We consider the vector space
$\mathcal{A} = \mathfrak{A} \times \mathfrak{S}$. There are two preferred
hyperplane $\mathcal{A}_{\mathfrak{A}}$ and $\mathcal{A}_{\mathfrak{S}}$
defined as
\[ \mathcal{A}_{\mathfrak{A}} = \{ (a, 1_{\mathfrak{S}}) |a \in \mathfrak{A}
   \}, \quad \mathcal{A}_{\mathfrak{S}} = \{ (I_{\mathfrak{H}}, \theta) |
   \theta \in \mathfrak{S} \}, \]
with the natural immersions $i_{\mathfrak{A}} : \mathfrak{A} \rightarrow
\mathcal{A}$ and $i_{\mathfrak{S}} : \mathfrak{S \rightarrow} \mathcal{A}$
defined as $i_{\mathfrak{A}} (a) = (a, 1_{\mathfrak{S}})$ and
$i_{\mathfrak{S}} (\theta) = (I_{\mathfrak{H}}, \theta)$ (we note that
$\mathcal{A}_{\mathfrak{A}} \assign i_{\mathfrak{A}} (\mathfrak{A})$ and
$\mathcal{A}_{\mathfrak{S}} \assign i_{\mathfrak{S}} (\mathfrak{S})$). It is
clear that $\mathcal{A}_{\mathfrak{A}}, \mathcal{A}_{\mathfrak{S}}$ generates
the whole $\mathcal{A}$. On $\mathcal{A}$ we define the following product
$\cdot$, in such a way that the maps $i_{\mathfrak{A}}$ and $i_{\mathfrak{S}}$
respect the product (i.e. $i_{\mathfrak{A}} (a b) = i_{\mathfrak{A}} (a) \cdot
i_{\mathfrak{A}} (b)$ and $i_{\mathfrak{S}} (\theta_1 \theta_2) =
i_{\mathfrak{S}} (\theta_1) \cdot i_{\mathfrak{S}} (\theta_2)$) and such that
\[ (V (\varphi (g)), 1_{\mathfrak{S}}) \cdot (I_{\mathfrak{H}}, \theta) =
   (I_{\mathfrak{H}}, \theta) \cdot (V (\varphi (g)), 1_{\mathfrak{S}}) = (V
   (\varphi (g)), \theta) \]
\[ (\omega (g), 1_{\mathfrak{S}}) \cdot (I_{\mathfrak{H}}, \theta) =
   (I_{\mathfrak{H}}, \theta) \cdot (\omega (g), 1_{\mathfrak{S}}) = (\omega
   (g), \theta) \]
\[ (\psi (g), 1_{\mathfrak{S}}) \cdot (I_{\mathfrak{H}}, \theta) = -
   (I_{\mathfrak{H}}, \theta) \cdot (\psi (g), 1_{\mathfrak{S}}) = (\psi (g),
   \theta) \]
\[ (\bar{\psi} (g), 1_{\mathfrak{S}}) \cdot (I_{\mathfrak{H}}, \theta) = -
   (I_{\mathfrak{H}}, \theta) \cdot (\bar{\psi} (g), 1_{\mathfrak{S}}) =
   (\bar{\psi} (g), \theta) \]
where $g \in \mathcal{S} (\mathbb{R})$ and $\theta \in \mathfrak{S}_1$ (not in
$\mathfrak{S}$). The product $\cdot$ can be uniquely extended (in a
associative way) on $\mathcal{A}$ since $\mathcal{A}_{\mathfrak{A}},
\mathcal{A}_{\mathfrak{S}}$ generates the whole $\mathcal{A}$, operators of
the form $V (\varphi (g))$, $\omega (g)$, $\psi (g)$, $\bar{\psi} (g)$
generates the whole $\mathfrak{A}$ and $\mathfrak{S}_1$ generates the whole
$\mathfrak{S}$. Hereafter we will omit to explicitly write the product $\cdot$
if this omission does not cause any confusion. \

\

On $\mathcal{A}$ we can define a linear operator $\langle \cdot \rangle :
\mathcal{A} \rightarrow \mathcal{A}_{\mathfrak{S}} \backsimeq \mathfrak{S}$
defined as
\[ \langle (a, \theta) \rangle = \langle a \rangle_{\Omega}
   (I_{\mathfrak{H}}, \theta) . \]
Furthermore for any $\theta_1, \ldots, \theta_n \in \mathcal{} \mathfrak{S}_1$
we define the linear operator $\int \cdot \mathd \theta_1 \ldots \mathd
\theta_n : \mathcal{A} \rightarrow \mathcal{A}_{\mathfrak{A}}$ such$\int \cdot
\mathd \theta_1 \ldots \mathd \theta_n |_{\mathcal{A}_{\mathfrak{S}}}$ is the
usual Berezin integral induced by the identification
$\mathcal{A}_{\mathfrak{S}} \backsimeq \mathfrak{S}$
\[ \int (a, \theta) \mathd \theta_1 \ldots \mathd \theta_n = \left( \int
   \theta \mathd \theta_1 \ldots \mathd \theta_n \right) (a, 1_{\mathfrak{S}})
   . \]
Hereafter we identify the space $\mathfrak{A}$ and $\mathfrak{S}$ with
$\mathcal{A}_{\mathfrak{A}}$ and $\mathcal{A}_{\mathfrak{S}}$ respectively,
and we write instead of $(a, 1_{\mathfrak{S}})$, $(I_{\mathfrak{H}}, \theta)$,
$(I_{\mathfrak{H}}, 1_{\mathfrak{S}})$ simply $a$, $\theta$ and $1$
respectively (in this way we take also the tacit identification of
$\tmop{span} \{ 1_{\mathfrak{S}} \} = \mathfrak{S}_0$ with $\mathbb{R}$).
Furthermore we identify $\varphi, \psi, \bar{\psi}, \omega$ with
$i_{\mathfrak{A}} \circ \varphi, i_{\mathfrak{A}} \circ \psi, i_{\mathfrak{A}}
\circ \bar{\psi}, i_{\mathfrak{A}} \circ \omega$.

\begin{remark}
  Since $\psi, \bar{\psi}$ are ``independent'' with respect to $\varphi$ and
  $\omega$ (since their can be realized on a space of the form
  $\mathfrak{H}=\mathfrak{H}_{\varphi, \omega} \otimes \mathfrak{H}_{\psi
  \comma \bar{\psi}}$) it is well defined the expectation with respect to the
  fields $\psi, \bar{\psi}$ only, namely is well defined an operator $\langle
  \cdot \rangle_{\psi, \bar{\psi}} : \mathcal{O_{\mathfrak{H}}} \rightarrow
  \mathcal{O}_{\mathfrak{\varphi, \omega}}$ such that
  \[ \begin{array}{l}
       \langle V (\varphi (t_1), \ldots, \varphi (t_k)) \psi (t_1') \bar{\psi}
       (t''_1) \cdots \psi (t_{k'}') \bar{\psi} (t''_{k'}) \rangle_{\psi,
       \bar{\psi}}\\
       \qquad \qquad = V (\varphi (t_1), \ldots, \varphi (t_k)) \langle \psi
       (t_1') \bar{\psi} (t''_1) \cdots \psi (t_{k'}') \bar{\psi} (t''_{k'})
       \rangle . \nobracket
     \end{array} \]
  We can extend the operator $\langle \cdot \rangle_{\psi, \bar{\psi}}$ to
  $\mathcal{A}$ in the way the operator $\langle \cdot \rangle$ is defined on
  $\mathcal{A}$.
\end{remark}

\subsection{Relation with SDEs}

In this section we want to use the super-field $\Phi$ for representing the
solution to the SDE~{\eqref{eq:SDEmain}} through the
integral~{\eqref{eq:integral}}.

\

First of all we have to define the notion of composition of the super-field
$\Phi$ with smooth functions. Consider the smooth function $H : \mathbb{R}
\rightarrow \mathbb{R}$ growing at most exponentially at infinity. We can
formally expand $H$ in Taylor series and using the properties of $\theta,
\bar{\theta}$ we obtain
\[ \begin{array}{rcl}
     H (\Phi (t, \theta, \bar{\theta})) & = & H (\varphi (t)) + H' (\varphi
     (t)) \bar{\psi} (t) \theta + H' (\varphi (t)) \psi (t) \bar{\theta} +\\
     &  & \quad + (H' (\varphi (t)) \omega (t) + H'' (\varphi (t)) \psi (t)
     \bar{\psi} (t)) \theta \bar{\theta} .
   \end{array} \]
Unfortunately the products $H' (\varphi (t)) \omega (t)$ and $H'' (\varphi
(t)) \psi (t) \bar{\psi} (t)$ are ill defined since the factors are not
regular enough. For this reason we consider a symmetric mollifier $\rho :
\mathbb{R} \rightarrow \mathbb{R}_+$ (with $\rho (t) = \rho (- t)$) and the
field $\Phi_{\epsilon} = \rho_{\epsilon} \ast \Phi$, where $\rho_{\epsilon}
(t) = \epsilon^{- 1} \rho (t \epsilon^{- 1})$. If $G$ is a super-function, $F$
is a smooth function and $\mathbb{K}$ is an entire function we define
\begin{equation}
  \begin{array}{l}
    \left\langle F (\varphi (0)) \mathbb{K} \left( \int G (t, \theta,
    \bar{\theta}) H (\Phi (t, \theta, \bar{\theta})) \mathd t \mathd \theta
    \mathd \bar{\theta} \right) \right\rangle\\
    \qquad \qquad \qquad \assign \lim_{\epsilon \rightarrow 0} \left\langle F
    (\varphi_{\varepsilon} (0)) \mathbb{K} \left( \int G (t, \theta,
    \bar{\theta}) H (\Phi_{\epsilon} (t, \theta, \bar{\theta})) \mathd t
    \mathd \theta \mathd \bar{\theta} \right) \right\rangle .
  \end{array} \label{eq:epsilon}
\end{equation}
We want to prove that the previous expression is well defined and does not
depend on $\rho$.

\begin{remark}
  It is important to note that the expression~{\eqref{eq:epsilon}} does not
  depend on $\rho$ only if $\rho$ is symmetric. If we choose a different
  $\rho$ (such that for example $\int_{- \infty}^0 \rho \mathd t \not =
  \int_0^{+ \infty} \rho \mathd t$) we will obtain a different limit. This is
  due to the fact that the products $H' (\varphi (t)) \omega (t)$ and $H''
  (\varphi (t)) \psi (t) \bar{\psi} (t)$ are ill defined and it is analogous
  to the possibility to obtain Ito or Stratonovich integral in stochastic
  calculus considering different approximations of the stochastic integral.
\end{remark}

\begin{lemma}
  \label{lemma_limit1}Let $F_1, \ldots, F_n : \mathbb{R} \times \mathbb{R}
  \rightarrow \mathbb{R}$ be smooth functions with compact support in the
  first variable and growing at most exponentially at infinity in the second
  variable then we have
  \begin{equation}
    \lim_{\epsilon \rightarrow 0} \left\langle \prod_{i = 1}^n \int F_i (t,
    \varphi_{\epsilon} (t)) \bar{\psi}_{\epsilon} (t) \psi_{\epsilon} (t)
    \mathd t \right\rangle_{\psi, \bar{\psi}} = \int \prod_{i = 1}^n F_i (t_i,
    \varphi (t)) \mathfrak{G}_n (t_1, \ldots, t_n) \mathd t_1 \ldots \mathd
    t_n . \label{eq:determinant}
  \end{equation}
  in $L^p (\mu_{\varphi})$. Here $\mathfrak{G}_n (t_1, \ldots, t_n) = \det
  ((G_{i, j})_{i, j = 1, \ldots, n})$ with $G_{i, j} = \mathcal{G} (t_j -
  t_i)$ if $i \neq j$ and $G_{i, i} = 1 / 2$.
\end{lemma}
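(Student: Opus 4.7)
First I would exploit the commutation relations \eqref{eq:commutator1}--\eqref{eq:commutator2}: $\varphi$ commutes with $\psi$ and $\bar{\psi}$, so each $F_i(t,\varphi_\epsilon(t))$ commutes with every fermionic operator; moreover any two bilinears $\bar{\psi}(t)\psi(t)$ commute with each other because two anticommutations cancel. These properties survive mollification, and allow me to rewrite
\[ \prod_{i=1}^n \int F_i(t_i,\varphi_\epsilon(t_i))\bar{\psi}_\epsilon(t_i)\psi_\epsilon(t_i)\,\mathd t_i = \int \prod_{i=1}^n F_i(t_i,\varphi_\epsilon(t_i)) \prod_{i=1}^n \bar{\psi}_\epsilon(t_i)\psi_\epsilon(t_i)\,\mathd t_1\cdots \mathd t_n. \]
Applying $\langle\cdot\rangle_{\psi,\bar{\psi}}$ then leaves the bosonic factor $\prod_i F_i(t_i,\varphi_\epsilon(t_i))$ untouched and reduces the problem to computing the deterministic fermionic correlation of $\prod_i \bar{\psi}_\epsilon(t_i)\psi_\epsilon(t_i)$ and interchanging it with the $\mathd t$-integration by Fubini.

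Next I would compute this fermionic correlation via Wick's theorem for the canonical anticommutation relations. The only non-vanishing contraction is $\langle\bar{\psi}_\epsilon(s)\psi_\epsilon(t)\rangle_{\psi,\bar{\psi}} = K_\epsilon(s-t)$ with $K_\epsilon := \rho_\epsilon\ast\mathcal{G}\ast\rho_\epsilon$, and a careful sign count for products of bilinears yields
\[ \Bigl\langle \prod_{i=1}^n \bar{\psi}_\epsilon(t_i)\psi_\epsilon(t_i)\Bigr\rangle_{\psi,\bar{\psi}} = \det\bigl(K_\epsilon(t_i-t_j)\bigr)_{i,j=1,\ldots,n}. \]
For $i\neq j$ and $t_i\neq t_j$, continuity of $\mathcal{G}$ away from $0$ gives $K_\epsilon(t_i-t_j)\to \mathcal{G}(t_i-t_j)$. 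The crux is the diagonal: a change of variables combined with the symmetry of $\rho_\epsilon$ and the identity $\mathcal{G}(u)+\mathcal{G}(-u)=e^{-m^2|u|}$ gives $K_\epsilon(0)=\tfrac{1}{2}\int\!\int\rho_\epsilon(u)\rho_\epsilon(v)e^{-m^2|u-v|}\,\mathd u\,\mathd v \to 1/2$, exactly the value $G_{i,i}=1/2$ in $\mathfrak{G}_n$. Hence $\det(K_\epsilon(t_i-t_j)) \to \mathfrak{G}_n(t_1,\ldots,t_n)$ Lebesgue-a.e., with the uniform bound $|\det|\leq n!$ since $|K_\epsilon|\leq\|\mathcal{G}\|_\infty=1$.

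Finally I would pass to the limit in $L^p(\mu_\varphi)$ by dominated convergence applied twice. Compact support of each $F_i$ in its first variable confines the $\mathd t$-integration to a bounded box on which $\varphi_\epsilon \to \varphi$ uniformly, so $F_i(t_i,\varphi_\epsilon(t_i)) \to F_i(t_i,\varphi(t_i))$ pointwise. The at-most-exponential growth of $F_i$ in its second variable is controlled by $|F_i(t_i,\varphi_\epsilon(t_i))|\leq C e^{c|\varphi_\epsilon(t_i)|}$, and uniform-in-$\epsilon$ Gaussian bounds $\sup_\epsilon\mathbb{E}[\sup_{t\in K}e^{cp|\varphi_\epsilon(t)|}]<\infty$ on any compact $K$ supply an $L^p$-dominant. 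The main obstacle is the careful implementation of the fermionic Wick theorem, in particular the sign bookkeeping for products of bilinears and the identification of the diagonal limit $1/2$; this is the only place where the symmetry of $\rho$ enters, and it is the super-field analogue of the Itô--Stratonovich discrepancy flagged in the preceding remark.
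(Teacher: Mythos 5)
Your proposal is correct and follows essentially the same route as the paper's (much terser) proof: the fermionic Wick expansion yielding the determinant, the limit $\mathcal{G}(t_i-t_j)$ off the diagonal, the symmetrization of $\rho$ producing the diagonal value $\tfrac12$ via $\mathcal{G}(u)+\mathcal{G}(-u)=e^{-m^2|u|}$, and uniform bounds to pass to the limit in $L^p(\mu_\varphi)$. You merely supply the details (explicit determinant bound, Fubini, Gaussian exponential moments for domination) that the paper leaves implicit.
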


\begin{proof}
  It is simple to see that $\lim_{\epsilon \rightarrow 0} \langle
  \bar{\psi}_{\epsilon} (t_1) \psi_{\epsilon} (t_2) \rangle_{\psi, \bar{\psi}}
  = \mathcal{G} (t_1 - t_2)$ when $t_1 \not = t_2$ and \ $\lim_{\epsilon
  \rightarrow 0} \langle \bar{\psi}_{\epsilon} (t) \psi_{\epsilon} (t)
  \rangle_{\psi, \bar{\psi}} = \frac{1}{2}$ (this is due to the fact that
  $\rho (t) = \rho (- t)$). Since $\langle \bar{\psi}_{\epsilon} (t_1)
  \psi_{\epsilon} (t_1) \cdots \bar{\psi}_{\epsilon} (t_2) \psi_{\epsilon}
  (t_n) \rangle_{\psi, \bar{\psi}}$ is uniformly bounded in $t$ and $\epsilon$
  and $F_i (t, \varphi_{\epsilon} (t))$ is uniformly bounded in $L^p
  (\mu_{\varphi})$ in $t$ and $\epsilon$ the claim follows.
\end{proof}

\begin{remark}
  \label{remark_limit}Since only one between $\mathcal{G} (t - s)$ and
  $\mathcal{G} (s - t)$ is non zero if $F_1 = F_2 = \cdots = F_n$ then
  \[ \lim_{\epsilon \rightarrow 0} \left\langle \left( \int F_1 (t,
     \varphi_{\epsilon} (t)) \bar{\psi}_{\epsilon} (t) \psi_{\epsilon} (t)
     \mathd t \right)^n \right\rangle_{\psi, \bar{\psi}} = \left( \frac{1}{2}
     \int F_1 (t, \varphi (t)) \mathd t \right)^n . \]
\end{remark}

\begin{lemma}
  \label{lemma_limit2}Let $F : \mathbb{R} \times \mathbb{R} \rightarrow
  \mathbb{R}$ be smooth functions with compact support in the first variable
  and growing at most exponentially at infinity in the second variable then we
  have in $L^p (\mu_{\varphi, \omega}) .$
  \[ \lim_{\epsilon \rightarrow 0} \int F (t, \varphi_{\epsilon} (t))
     \omega_{\epsilon} (t) = \int F (t, \varphi (t)) \circ \mathd B (t) + i
     \int F (t, \varphi (t)) \mathd W (t) \]
  where the first one is Stratonovich integral and the second one is Ito
  integral with respect to (double sided) Brownian motions $(B (t), W (t))_{t
  \in \mathbb{R}}$ such that $\partial_t B (t) = \xi (t)$ and \ $\partial_t W
  (t) = \eta (t)$ with $B_0 = W_0 = 0$.
\end{lemma}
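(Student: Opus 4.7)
The plan is to decompose $\omega_\epsilon = \xi_\epsilon + i \eta_\epsilon$ and handle the real and imaginary parts separately, exploiting that $\eta$ is by construction independent of the pair $(\varphi, \xi)$. This decomposition matches the target: the real piece should converge to a Stratonovich integral against $B$ (because $\xi = \dot B$ is coupled to $\varphi$, a Wong--Zakai situation), while the imaginary piece should converge to an It\^o integral against $W$ (because independence precludes any correction term).

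For the imaginary part $\int F(t, \varphi_\epsilon(t)) \eta_\epsilon(t) \mathd t$, I would condition on $\varphi$. By independence of $\eta$ and $\varphi$, the conditional law is that of a Wiener integral $\int K_\epsilon(s) \mathd W(s)$ with $K_\epsilon(s) = \int F(t, \varphi_\epsilon(t)) \rho_\epsilon(t-s) \mathd t$. On compact sets, $K_\epsilon \to F(\cdot, \varphi(\cdot))$ in $L^2(\mathd s)$ by smoothness of $F$ and continuity of $\varphi$; It\^o's isometry then yields convergence to $\int F(t, \varphi(t)) \mathd W(t)$ in $L^2$ conditional on $\varphi$. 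Integrability in $\varphi$ together with Gaussian hypercontractivity upgrade this to $L^p(\mu_{\varphi, \omega})$, using compact support of $F$ in its first variable and the exponential-growth hypothesis in the second, combined with standard moment bounds on $\varphi$.

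For the real part $I_\epsilon = \int F(t, \varphi_\epsilon(t)) \xi_\epsilon(t) \mathd t$, I would use Malliavin--Skorohod calculus. Fubini rewrites $I_\epsilon = \int H_\epsilon(s) \mathd B(s)$ with $H_\epsilon(s) = \int F(t, \varphi_\epsilon(t)) \rho_\epsilon(t-s) \mathd t$, which is anticipating; its Skorohod decomposition is $I_\epsilon = \delta H_\epsilon + \int D_s H_\epsilon(s)\, \mathd s$. The divergence $\delta H_\epsilon$ converges to the It\^o integral $\int F(t, \varphi(t)) \mathd B(t)$, since $H_\epsilon \to F(\cdot, \varphi(\cdot))$ and the limit is adapted. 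The trace term produces the Stratonovich correction: using $D_s \varphi_\epsilon(t) = (\rho_\epsilon \ast \mathcal{G})(t-s)$ one computes
\[
\int D_s H_\epsilon(s)\, \mathd s = \int \partial_x F(t, \varphi_\epsilon(t)) \left( \int (\rho_\epsilon \ast \mathcal{G})(t-s)\, \rho_\epsilon(t-s)\, \mathd s \right) \mathd t,
\]
and the inner factor tends to $\tfrac12$ as $\epsilon \to 0$ by symmetry of $\rho$ together with the jump $\mathcal{G}(0^+) - \mathcal{G}(0^-) = 1$. Adding the two contributions gives $\int F(t, \varphi(t)) \mathd B(t) + \tfrac12 \int \partial_x F(t, \varphi(t)) \mathd t = \int F(t, \varphi(t)) \circ \mathd B(t)$, consistent with the covariation $\mathd [\varphi, B]_t = \mathd t$ coming from $\mathd \varphi = -m^2 \varphi \mathd t + \mathd B$.

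The main obstacle is upgrading these limits to $L^p(\mu_{\varphi, \omega})$. The trace part requires uniform control of a convolution of three mollifier-type kernels against $\mathcal{G}$, which is elementary but must be checked uniformly in $t$ on the compact support of $F$. The divergence part is more delicate because $H_\epsilon$ is anticipating: here one invokes Meyer-type Malliavin inequalities and shows that $H_\epsilon - F(\cdot, \varphi(\cdot))$ has both its $L^2(\mathd s)$-norm and its Malliavin derivative vanishing in suitable $L^p$-norms. Hypercontractivity on the Wiener chaos of $B$, together with the growth/support hypotheses on $F$ and Gaussian moment bounds on $\varphi$, then close the estimate and yield the claim.
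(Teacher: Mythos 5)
Your argument is correct, but it is a genuinely different (and far more detailed) route than the paper's: the paper disposes of this lemma in one line by citing the Wong--Zakai approximation theorems for ordinary integrals converging to stochastic integrals, whereas you reprove the statement from scratch. Your split of $\omega_\epsilon=\xi_\epsilon+i\eta_\epsilon$ is the right first move in either approach; for the $\eta$-part the conditional It\^o-isometry argument is exactly what is needed (this piece is not really Wong--Zakai at all, since independence of $\eta$ and $\varphi$ kills any correction term), and for the $\xi$-part your Skorohod decomposition $I_\epsilon=\delta H_\epsilon+\int D_sH_\epsilon(s)\,\mathd s$ correctly isolates the It\^o limit and the trace term, with $\int(\rho_\epsilon\ast\mathcal G)(u)\rho_\epsilon(u)\,\mathd u=\int(\rho_\epsilon\ast\rho_\epsilon)(v)\mathcal G(v)\,\mathd v\to\tfrac12(\mathcal G(0^+)+\mathcal G(0^-))=\tfrac12$ giving precisely the Stratonovich correction $\tfrac12\int\partial_xF(t,\varphi(t))\,\mathd t$. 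What your approach buys is transparency: the classical Wong--Zakai statements concern integrands and integrators built from the same path, whereas here $\varphi=\mathcal G\ast\xi$ and $B$ are distinct but correlated, and your explicit computation of the overlap constant makes the role of the \emph{symmetry} of $\rho$ (emphasized in the paper's Remark~5) completely visible; the citation route is shorter but hides this. Two small points to tighten: the $L^p$ upgrade for $\delta(H_\epsilon-F(\cdot,\varphi(\cdot)))$ should rest on Meyer's inequalities $\|\delta u\|_{L^p}\lesssim\|u\|_{L^p(\Omega;H)}+\|Du\|_{L^p(\Omega;H\otimes H)}$ rather than on hypercontractivity (the latter applies only on a fixed chaos, and $F(t,\varphi(t))$ is not chaos-homogeneous, though conditionally on $\varphi$ the $\eta$-part genuinely lives in the first chaos of $W$); and the relevant fact about $\mathcal G$ at the origin is that the symmetric average $\tfrac12(\mathcal G(0^+)+\mathcal G(0^-))$ equals $\tfrac12$ because $\mathcal G(0^-)=0$, which is what your kernel computation actually uses.
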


\begin{proof}
  This is the Wong--Zakai
  theorem~{\cite{wong_convergence_1965,wong_relation_1965,stroock_support_1972}}.
\end{proof}

\begin{theorem}
  \label{theorem_limit1}When $\mathbb{K}$ is a polynomial and $H$ grows at
  most exponentially at infinity, or $\mathbb{K}$ is entire and $H$ is bounded
  with first and second derivative bounded, the limit~{\eqref{eq:epsilon}} is
  well defined and does not depend on the symmetric mollifier $\rho$.
\end{theorem}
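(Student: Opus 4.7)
The plan is to reduce the super-space integral to a sum of ordinary integrals by carrying out the Berezin integration in $\theta,\bar\theta$ explicitly, and then to combine Lemma~\ref{lemma_limit1} and Lemma~\ref{lemma_limit2} to treat the resulting bosonic and fermionic pieces separately. First I would substitute the explicit form of $H(\Phi_\epsilon)$ given before Lemma~\ref{lemma_limit1} together with $G = G_\emptyset + G_\theta\theta + G_{\bar\theta}\bar\theta + G_{\theta\bar\theta}\theta\bar\theta$, extract the $\theta\bar\theta$-coefficient of the product, and integrate in $t$. This rewrites the argument of $\mathbb{K}$ as a fixed finite sum
\[
 X_\epsilon \;=\; \sum_{a} \int F_a(t,\varphi_\epsilon(t))\, A_a(t)\,\mathd t,
\]
where $A_a(t)$ is one of $1$, $\omega_\epsilon(t)$, $\psi_\epsilon(t)\bar\psi_\epsilon(t)$, $\psi_\epsilon(t)$, $\bar\psi_\epsilon(t)$ (the last two multiplied by Grassmann coefficients coming from $G_\theta, G_{\bar\theta}$), and each $F_a$ inherits compact $t$-support from $G$ and at-most-exponential growth in the second variable from $H,H',H''$.

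In the polynomial case $\mathbb{K}(z) = \sum_{k=0}^{N} c_k z^k$ I would expand $\mathbb{K}(X_\epsilon)$ into a finite sum of multiple integrals and use the tensor factorization $\mathfrak{H} = \mathfrak{H}_{\varphi,\omega} \otimes \mathfrak{H}_{\psi,\bar\psi}$ to perform the fermionic expectation $\langle\cdot\rangle_{\psi,\bar\psi}$ first. Unpaired $\psi$ or $\bar\psi$ factors integrate to zero, while products of bilinears $\bar\psi_\epsilon\psi_\epsilon$ converge in $L^p(\mu_\varphi)$ to the determinantal kernel $\mathfrak{G}_n$ of Lemma~\ref{lemma_limit1}. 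The remaining object is a polynomial in ordinary Riemann integrals of smooth functions of $\varphi_\epsilon$ and in integrals of the form $\int F_a(t,\varphi_\epsilon)\omega_\epsilon\,\mathd t$; by Lemma~\ref{lemma_limit2} each of these converges in every $L^p(\mu_{\varphi,\omega})$ to the corresponding Stratonovich-plus-Itô expression. Uniform $L^p$ bounds together with the continuity of $F$ applied at $\varphi_\epsilon(0)$ then let me take the joint limit under the expectation. Independence of the limit from the choice of symmetric mollifier $\rho$ is automatic at this stage, because every limiting ingredient (the fermionic determinant and the Stratonovich/Itô integrals) is intrinsic to $\mathcal{G}$.

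For entire $\mathbb{K}$ with $H,H',H''$ bounded, I would upgrade the polynomial argument by a termwise limit in the Taylor series $\mathbb{K}(z)=\sum_k c_k z^k$. The key missing ingredient is a uniform-in-$\epsilon$ moment estimate of the form $\langle |X_\epsilon|^{2k}\rangle \leqslant C^k\,k^k$, obtained by combining (i) the boundedness of $F_a$ (which follows from the boundedness of $H,H',H''$ and the compact $t$-support of $G$), (ii) Gaussian moment bounds for the chaos $\int F_a(t,\varphi_\epsilon)\omega_\epsilon\,\mathd t$, and (iii) the determinantal bound of Lemma~\ref{lemma_limit1} applied to $k$-fold products of fermionic bilinears. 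Since the Taylor coefficients of an entire function satisfy Cauchy bounds $|c_k|\leqslant M R^{-k}/k!$ for every $R$, dominated convergence in $k$ then permits exchanging $\sum_k$ and $\lim_{\epsilon\to 0}$, reducing everything to the polynomial case handled above.

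The step I expect to be the main obstacle is precisely this uniform moment estimate in the entire case: controlling the combinatorics of $\langle X_\epsilon^{2k}\rangle$ so that the growth in $k$ remains compatible with the decay of $|c_k|$, given that Lemma~\ref{lemma_limit1} only provides boundedness of a single fermionic integral rather than bounds on arbitrary products. The delicate point is that the $\omega_\epsilon$ contribution lives in a Wiener chaos whose order grows with $k$, so the estimate must be combined with the determinantal structure of the fermionic moments in order to keep the constants from blowing up as $\epsilon\to 0$.
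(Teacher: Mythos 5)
Your proposal follows essentially the same route as the paper's (very terse) proof: carry out the Berezin integration to reduce everything to ordinary integrals of $H, H', H''$ against $1$, $\omega_{\epsilon}$ and $\bar{\psi}_{\epsilon}\psi_{\epsilon}$, settle the polynomial case by Lemmas~\ref{lemma_limit1} and~\ref{lemma_limit2}, and settle the entire case by exchanging the Taylor series of $\mathbb{K}$ with the limit in $\epsilon$ via uniform-in-$\epsilon$ moment bounds. One caveat on your last step: the Cauchy estimate you invoke, $|c_k|\leqslant M R^{-k}/k!$, is false for a general entire function (the correct bound is $|c_k|\leqslant M(R)R^{-k}$ with $M(R)$ depending on $R$), so your domination argument as written only covers functions of exponential type and does not by itself tame the $C^k k^{k/2}$ growth of the Gaussian moments of the $\omega_{\epsilon}$-integral; the paper's own one-line justification of this exchange is, however, no more complete, and the intended application is $\mathbb{K}=\exp$, where the exponential-martingale structure saves the summability.
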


\begin{proof}
  When $\mathbb{K}$ is a polynomial the thesis follows directly from
  Lemma~\ref{lemma_limit1} and Lemma~\ref{lemma_limit2}. If $\mathbb{K}$ is an
  entire function and $H$ is a bounded function with first and second
  derivatives bounded it is possible to exchange the limit in $\epsilon$ with
  the power series, since $$\left\langle \left| \left\langle \left( \int G (t,
  \theta, \bar{\theta}) H (\Phi_{\epsilon} (t, \theta, \bar{\theta}))
  \right)^k \mathd t \mathd \theta \mathd \bar{\theta} \right\rangle_{\psi,
  \bar{\psi}} \right|^p \right\rangle$$ is uniformly bounded in $\epsilon$ for
  any $p > 1$. \ 
\end{proof}

\begin{theorem}
  \label{theorem_limit2}Suppose that $G (t, \theta, \bar{\theta}) =
  G_{\emptyset} (t) + G_{\theta \bar{\theta}} (t) \theta \bar{\theta}$ and
  that $H$ is bounded with the first and second derivatives bounded then
  
  \begin{multline}
    \left\langle F (\varphi (0)) \exp \left( \int G (t, \theta, \bar{\theta})
    H (\Phi (t, \theta, \bar{\theta})) \mathd t \mathd \theta \mathd
    \bar{\theta} \right) \right\rangle =\\
    = \int F (\varphi (0)) \exp \left( \frac{1}{2} \int G_{\emptyset} (t) H''
    (\varphi (t)) \mathd t - \int G_{\emptyset} (t) H' (\varphi (t)) \circ
    \mathd \xi (t) + \right.\\
    \left. - \frac{1}{2} \int (G_{\emptyset} (t) H' (\varphi (t)))^2 \mathd t
    - \int G_{\theta \bar{\theta}} (t) H (\varphi (t)) \mathd t \right)
    \mu_{\varphi} (\mathd \varphi) .
  \end{multline}
\end{theorem}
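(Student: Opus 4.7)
The plan is to reduce the super-integral to a purely bosonic expression by doing the Berezin integral explicitly, then pass to the limit $\epsilon\to 0$ using the two lemmas already proved, and finally remove the auxiliary Gaussian $\eta$ by a standard characteristic-function computation.

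First I would substitute the previously derived expansion
\[ H(\Phi_\epsilon(t,\theta,\bar\theta)) = H(\varphi_\epsilon) + H'(\varphi_\epsilon)\bar\psi_\epsilon\theta + H'(\varphi_\epsilon)\psi_\epsilon\bar\theta + \bigl(H'(\varphi_\epsilon)\omega_\epsilon + H''(\varphi_\epsilon)\psi_\epsilon\bar\psi_\epsilon\bigr)\theta\bar\theta \]
into $\int G(t,\theta,\bar\theta)H(\Phi_\epsilon)\,dt\,d\theta\,d\bar\theta$. Using $G=G_\emptyset+G_{\theta\bar\theta}\theta\bar\theta$, the odd-degree Grassmann terms are annihilated by Berezin integration, and (with $\int\theta\bar\theta\,d\theta\,d\bar\theta=-1$) I am left with
\[ \int G\,H(\Phi_\epsilon)\,dt\,d\theta\,d\bar\theta = -\!\int\!\bigl[G_\emptyset(H'(\varphi_\epsilon)\omega_\epsilon+H''(\varphi_\epsilon)\psi_\epsilon\bar\psi_\epsilon)+G_{\theta\bar\theta}H(\varphi_\epsilon)\bigr]\,dt. \]

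Next I would exploit that $\psi,\bar\psi$ live on the factor $\mathfrak{H}_{\psi,\bar\psi}$ independent of $\mathfrak{H}_{\varphi,\omega}$, so $\langle\cdot\rangle=\langle\langle\cdot\rangle_{\psi,\bar\psi}\rangle_{\varphi,\omega}$. After writing $\psi_\epsilon\bar\psi_\epsilon=-\bar\psi_\epsilon\psi_\epsilon$, the only fermionic factor in $\exp(\int G\,H(\Phi_\epsilon))$ is $\exp(\int G_\emptyset H''(\varphi_\epsilon)\bar\psi_\epsilon\psi_\epsilon\,dt)$. Expanding the exponential as a power series, applying $\langle\cdot\rangle_{\psi,\bar\psi}$ termwise, and using Remark \ref{remark_limit} with $F_1(t,x)=G_\emptyset(t)H''(x)$, the fermionic contribution converges as $\epsilon\to 0$ to $\exp\bigl(\tfrac12\int G_\emptyset H''(\varphi)\,dt\bigr)$. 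The termwise limit interchange is legitimate because the uniform $L^p$ bounds established in the proof of Theorem \ref{theorem_limit1} (together with boundedness of $H,H',H''$) provide dominated convergence for the series and its factors.

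For the remaining $\omega$-dependent factor I would invoke Lemma \ref{lemma_limit2} with $F(t,x)=G_\emptyset(t)H'(x)$ (bounded because $H'$ is bounded, and compactly supported in $t$), giving
\[ -\!\int G_\emptyset H'(\varphi_\epsilon)\omega_\epsilon\,dt \;\longrightarrow\; -\!\int G_\emptyset H'(\varphi)\circ dB \;-\; i\!\int G_\emptyset H'(\varphi)\,dW \]
in $L^p(\mu_{\varphi,\omega})$. Since $W$ is a Brownian motion independent of $\varphi$ (equivalently, of $B$), I condition on $\varphi$ and use the Gaussian characteristic-function identity $\mathbb{E}_W\bigl[e^{-i\int A\,dW}\bigr]=e^{-\frac12\int A^2\,dt}$ with $A=G_\emptyset H'(\varphi)$ to obtain the term $-\tfrac12\int(G_\emptyset H'(\varphi))^2\,dt$. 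The untouched Grassmann-diagonal term $-\int G_{\theta\bar\theta}H(\varphi_\epsilon)\,dt$ converges to its natural limit, and $F(\varphi_\epsilon(0))\to F(\varphi(0))$, so assembling the four exponentials produces exactly the integrand claimed on the right-hand side.

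The main obstacle, as usual in this super-analytic setup, is the simultaneous handling of three limits: the power-series expansion of the exponential, the mollifier limit $\epsilon\to 0$, and the conditional expectation over $W$. I would do them in the order above---fermionic series first (Remark \ref{remark_limit}), then the Wong--Zakai-type limit on the $\omega$-piece (Lemma \ref{lemma_limit2}), then the $W$-Gaussian integration---each step justified by the uniform $L^p$ bounds from Theorem \ref{theorem_limit1} and the boundedness hypotheses on $H,H',H''$, which make the exponentials uniformly integrable and allow dominated convergence throughout.
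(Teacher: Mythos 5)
Your proposal is correct and follows essentially the same route as the paper's (very terse) proof: expand the Berezin integral, factor the exponential, use Remark~\ref{remark_limit} for the fermionic determinant, Lemma~\ref{lemma_limit2} (Wong--Zakai) for the $\omega$-term, the Gaussian characteristic function of the independent noise $W$ to produce the $-\frac{1}{2}\int(G_{\emptyset}H'(\varphi))^2$ factor, and the uniform $L^p$ bounds from Theorem~\ref{theorem_limit1} to justify the interchanges. You have merely filled in the details (including the correct sign bookkeeping from $\int\theta\bar{\theta}\,\mathd\theta\,\mathd\bar{\theta}=-1$ and $\psi\bar{\psi}=-\bar{\psi}\psi$) that the paper leaves implicit.
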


\begin{proof}
  The proof follows from Theorem~\ref{theorem_limit1}, the multiplicative
  property of exponential, Remark~\ref{remark_limit}, and the fact that the
  Fourier transform of a process integrated with respect to an independent
  white noise can be computed explicitly and in this case gives the factor
  $\exp (- \frac{1}{2} \int (G_{\emptyset} (t) H' (\varphi (t)))^2 \mathd t)$.
\end{proof}

\section{Supersymmetry and the supersymmetric field}

\subsection{The supersymmetry}

On $C^{\infty} (\mathbb{R} \times \mathfrak{S}^2_1)$ one can introduce the
(graded) derivations
\[ Q \assign 2 \theta \partial_t + \partial_{\bar{\theta}}, \qquad \bar{Q}
   \assign 2 \bar{\theta} \partial_t - \partial_{\theta}, \]
which are such that
\[ Q (t + 2 \theta \bar{\theta}) = 0 = \bar{Q} (t + 2 \theta \bar{\theta}), \]
namely they annihilate the function $t + 2 \theta \bar{\theta}$ defined on
$\mathbb{R} \times \mathfrak{S}^2_1$. Moreover if $Q F = \bar{Q} F = 0$, for
$F$ in $C^{\infty} (\mathbb{R} \times \mathfrak{S}^2_1)$, then we must have
\[ 0 = Q F (x, \theta, \bar{\theta}) = 2 \partial_t f_{\emptyset} (t) \theta +
   f_{\bar{\theta}} (t) + \partial_t f_{\bar{\theta}} (x) \theta \bar{\theta}
   - f_{\theta \bar{\theta}} (t) \theta \]
\[ 0 = \bar{Q} F (x, \theta, \bar{\theta}) = 2 \partial_t f_{\emptyset} (t)
   \bar{\theta} + f_{\theta} (t) - \partial_t f_{\theta} (x) \theta
   \bar{\theta} - f_{\theta \bar{\theta}} (t) \bar{\theta} \]
and therefore
\[ \partial_t f_{\emptyset} (t) = \frac{1}{2} f_{\theta \bar{\theta}} (t)
   \qquad \text{and} \qquad f_{\theta} (t) = f_{\bar{\theta}} (t) = 0. \]
This means that there exists an $f \in C^{\infty} (\mathbb{R}, \mathbb{R})$
such that
\[ f (t + 2 \theta \bar{\theta}) = f (t) + 2 f' (t) \theta \bar{\theta} =
   f_{\emptyset} (t) + f_{\theta \bar{\theta}} (t) \theta \bar{\theta} = F (t,
   \theta, \bar{\theta}) . \]
Namely any function satisfying these two equations can be written in the form
\[ F (t, \theta, \bar{\theta}) = f (t + 2 \theta \bar{\theta}) . \]
Suppose that $t > 0$, if we introduce the linear transformations
\[ \tau (b, \bar{b}) \left(\begin{array}{c}
     t\\
     \theta\\
     \bar{\theta}
   \end{array}\right) = \left(\begin{array}{c}
     t + 2 \bar{b} \theta \rho + 2 b \bar{\theta} \rho\\
     \theta - b \rho\\
     \bar{\theta} + \bar{b} \rho
   \end{array}\right) \in \mathfrak{S} (\theta, \bar{\theta}, \rho) \]
for $b, \bar{b} \in \mathbb{R}$ and where $\rho \in \mathfrak{S}_1$ is a new
odd variable different from $\theta, \bar{\theta}$, then we have
\[ \left. \frac{\mathd}{\mathd a} \right|_{a = 0} \tau (a b, a \bar{b}) F (t,
   \theta, \bar{\theta}) = \left. \frac{\mathd}{\mathd a} \right|_{a = 0} F
   (\tau (a b, a \bar{b}) (t, \theta, \bar{\theta})) = (b \cdot \bar{Q} +
   \bar{b} \cdot Q) F (t, \theta, \bar{\theta}) \]
so $\tau (b, \bar{b}) = \exp (b \cdot \bar{Q} + \bar{b} \cdot Q)$ and $\tau (a
b, a \bar{b}) \tau (c b, c \bar{b}) = \tau ((a + c) b, (a + c) \bar{b})$.

In particular $F \in C^{\infty} (\mathbb{R} \times \mathfrak{S}^2)$ is
supersymmetric if and only if for any $b, \bar{b} \in \mathbb{R}$ we have
$\tau (b, \bar{b}) F = F$.

\

By duality the operators $Q, \bar{Q}$ and $\tau (b, \bar{b})$ also act on the
space $\mathcal{S}' (\mathfrak{S})$ and we say that the distribution $T \in
\mathcal{S}' (\mathfrak{S})$ is supersymmetric if it is invariant with respect
to rotations in space and $Q T = \bar{Q} T = 0$. For supersymmetric functions
and distribution the following fundamental theorem holds.

\begin{theorem}
  \label{theorem_supersymmetry1}Let $F \in \mathcal{S} (\mathfrak{S})$ and $T
  \in \mathcal{S}' (\mathfrak{S})$ such that $T_0$ is a continuous function.
  If both $F$ and $T$ are supersymmetric. Then for any $K \in \mathbb{R}$ we
  have the reduction formula
  \begin{equation}
    \int_{- \infty}^K T (t, \theta, \bar{\theta}) \cdot F (t, \theta,
    \bar{\theta}) \mathd t \mathd \theta \mathd \bar{\theta} = - 2
    T_{\emptyset} (K) F_{\emptyset} (K) . \label{eq:key}
  \end{equation}
\end{theorem}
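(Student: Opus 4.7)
The plan is to convert the left-hand side into an exact $t$-derivative and then apply the fundamental theorem of calculus. First, I would translate supersymmetry into component relations. For the Schwartz superfunction $F$ the calculation $QF=\bar{Q}F=0$ has already been carried out in the preceding discussion and forces $F_\theta=F_{\bar{\theta}}=0$ together with $F_{\theta\bar{\theta}}=2F_\emptyset'$, hence $F(t,\theta,\bar{\theta})=F_\emptyset(t)+2F_\emptyset'(t)\theta\bar{\theta}$. Read in the dual sense, the same algebraic identities give $T_\theta=T_{\bar{\theta}}=0$ and $T_{\theta\bar{\theta}}=2T_\emptyset'$ in $\mathcal{S}'(\mathbb{R})$, so $T$ admits the analogous expansion with $T_\emptyset'$ understood as a distributional derivative.

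Next I would multiply $T$ and $F$ in the Grassmann algebra. Because $(\theta\bar{\theta})^2=0$, only the scalar component $T_\emptyset F_\emptyset$ and the coefficient of $\theta\bar{\theta}$ survive, the latter being $2(T_\emptyset F_\emptyset'+T_\emptyset'F_\emptyset)$. Since $F_\emptyset$ is smooth, the smooth-times-distribution Leibniz rule identifies this coefficient with $2(T_\emptyset F_\emptyset)'$. Performing the Berezin integration in the $\theta,\bar{\theta}$ variables, and using $\int\theta\bar{\theta}\,\mathd\theta\,\mathd\bar{\theta}=-1$ (which follows directly from the rules in Section~\ref{sec:supersymm} after moving $\theta$ to the right via $\theta\bar{\theta}=-\bar{\theta}\theta$), I obtain the scalar distribution $-2(T_\emptyset F_\emptyset)'$ on $\mathbb{R}$.

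Finally I would integrate in $t$ over $(-\infty,K]$. Because by hypothesis $T_\emptyset$ is a continuous tempered function and $F_\emptyset\in\mathcal{S}(\mathbb{R})$, the product $g:=T_\emptyset F_\emptyset$ is continuous, locally absolutely continuous, and vanishes at $-\infty$. Pairing the distribution $-2g'$ with the indicator of $(-\infty,K]$ therefore gives $-2(g(K)-g(-\infty))=-2T_\emptyset(K)F_\emptyset(K)$, which is the claimed reduction formula.

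The only step requiring real care is the last one: giving meaning to a distributional integral against an indicator function and evaluating the boundary value $g(-\infty)$. This is legitimate here only because, after Berezin integration, the object being integrated is already an \emph{exact} derivative of a continuous, rapidly decaying function, so no genuine pairing of a distribution with a discontinuous test function is needed. I do not expect any deeper obstacle; in particular the rotation-invariance clause in the definition of supersymmetric distribution appears to play no active role in one dimension beyond what $QT=\bar{Q}T=0$ already forces.
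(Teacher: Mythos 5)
Your proof is correct and follows essentially the same route as the paper's: reduce both super-objects to the form $A_\emptyset(t)+2A_\emptyset'(t)\theta\bar{\theta}$, observe that the Berezin integral of the product is the exact derivative $-2\partial_t(T_\emptyset F_\emptyset)$, and apply the fundamental theorem of calculus on $(-\infty,K]$. The only difference is that the paper carries out this computation only for $T$ a smooth super-function and cites \cite{Klein1984,Zaboronsky1997} for the general case, whereas you treat distributional $T$ with continuous scalar part directly; your care at the final boundary-evaluation step is exactly what that extension requires, though the claim that $T_\emptyset F_\emptyset$ is locally absolutely continuous is neither implied by continuity alone nor needed, since the pairing of the distributional derivative of a continuous, decaying function with the indicator already yields the boundary values.
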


\begin{proof}
  The proof can be found in~{\cite{Klein1984}}, Lemma~4.5 for $\mathbb{R}^2$
  and in~{\cite{Zaboronsky1997}} for the case of a general super-manifold.
  Here we give the proof only for the case where $T$ is a super-function. In
  this case we have that $T (t, \theta, \bar{\theta}) = T_{\emptyset} (t) + 2
  T_{\emptyset}' (t) \theta \bar{\theta}$ and $F (t, \theta, \bar{\theta}) =
  F_{\emptyset} (t) + 2 F_{\emptyset}' (t) \theta \bar{\theta}$ from which we
  have
  \[ \begin{array}{lll}
       T (t, \theta, \bar{\theta}) \cdot F (t, \theta, \bar{\theta}) & = &
       T_{\emptyset} (t) F_{\emptyset} (t) + 2 (T'_{\emptyset} (t)
       F_{\emptyset} (t) + T_{\emptyset} (t) F'_{\emptyset} (t)) \theta
       \bar{\theta}\\
       & = & T_{\emptyset} (t) F_{\emptyset} (t) + 2 \partial_t
       (T_{\emptyset} F_{\emptyset}) (t) \theta \bar{\theta} .
     \end{array} \]
  By definition of Berezin integral we have
  \begin{eqnarray}
    \int_{- \infty}^K T (t, \theta, \bar{\theta}) \cdot F (t, \theta,
    \bar{\theta}) \mathd x \mathd \theta \mathd \bar{\theta} & = & - 2 \int_{-
    \infty}^K \partial_t (T_{\emptyset} F_{\emptyset}) (t) \mathd t
    \nonumber\\
    & = & - 2 T_{\emptyset} (K) F_{\emptyset} (K) . \nonumber
  \end{eqnarray}
\end{proof}

\begin{remark}
  \label{remark_supersymmetry}In Theorem~\ref{theorem_supersymmetry1} we can
  assume that $F = F_{\emptyset} (t) + F_{\theta \bar{\theta}} (t) \theta
  \bar{\theta}$ and $T (t, \theta, \bar{\theta}) = T_{\emptyset} (t) +
  T_{\theta \bar{\theta}} (t) \theta \bar{\theta}$ where $F_{\theta
  \bar{\theta}} (t) = 2 F_{\emptyset}' (t)$ and $T_{\theta \bar{\theta}} (t) =
  2 T_{\emptyset}' (t)$ only for $t \leqslant K$. In some way we can consider
  supersymmetric functions only on the set $(- \infty, K]$. 
\end{remark}

\subsection{Localization of supersymmetric averages}

\begin{remark}
  \label{remark_supersymmetry2}We note that the correlation function
  \[ C^{\Phi} (t, s, \theta, \bar{\theta}) = \langle \varphi (t) \Phi (s,
     \theta, \bar{\theta}) \rangle = \frac{1}{2 m^2} \mathcal{G} (t - s) +
     \mathcal{G} (t - s) \theta \bar{\theta} \]
  is a supersymmetric function when $t \geqslant s$.
\end{remark}

\begin{lemma}
  \label{lemma_superprobability}Let $g (t)$ be smooth function with compact
  support, let $P$ be a polynomial and let $t_1 > t_2 > \cdots > t_k$ and $M =
  (m_1, \ldots, m_k) \in \mathbb{N}^k$ then
 \begin{multline*} \mathcal{H}_{\ell, P}^{M, G} (t_1, \ldots, t_k) =
\\ =\left\langle \prod_{j
     = 1}^k \varphi (t_j)^{m_j} \int_{- \infty}^{t_k} \int_{- \infty}^{\tau_1}
     \cdots \int_{- \infty}^{\tau_{\ell}} \prod_{i = 1}^{\ell} g \left( \tau_i
     + 2 \theta_i \bar{\theta}_i \right) P (\Phi (\tau_i, \theta_i,
     \bar{\theta}_i)) \mathd \tau_i \mathd \theta_i \mathd \bar{\theta}_i
     \right\rangle = \\
   = \frac{(- 2 g (t_k))^{\ell}}{\ell !} \left\langle \prod_{j = 1}^k
     \varphi (t_j)^{m_j} P (\varphi (t_k))^{\ell} \right\rangle . 
\end{multline*}
\end{lemma}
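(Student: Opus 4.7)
The plan rests on two observations combined with the supersymmetric localization of Theorem~\ref{theorem_supersymmetry1}. First, every factor $g(\tau_i + 2\theta_i\bar\theta_i) P(\Phi(\tau_i, \theta_i, \bar\theta_i))$ is Grassmann-even, so these factors mutually commute and the ordered nested integration may be replaced by its symmetric version divided by $\ell!$. Setting
\[
 J(K):=\int_{-\infty}^{K} g(\sigma+2\theta\bar\theta)P(\Phi(\sigma,\theta,\bar\theta))\,d\sigma\,d\theta\,d\bar\theta,
\]
this yields $\mathcal{H}_{\ell,P}^{M,G}(t_1,\ldots,t_k) = \frac{1}{\ell!}\langle\prod_j\varphi(t_j)^{m_j}J(t_k)^{\ell}\rangle$, while the claimed right-hand side equals $\frac{1}{\ell!}\langle\prod_j\varphi(t_j)^{m_j}(-2g(t_k)P(\varphi(t_k)))^{\ell}\rangle$. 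It therefore suffices to show that $J(t_k)$ and $-2g(t_k)P(\varphi(t_k))$ have identical joint correlation functions against arbitrary polynomials in the $\varphi(t_j)$.

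Second, I expand $\langle\prod_j\varphi(t_j)^{m_j}\prod_i g_iP(\Phi_i)\rangle$ by Wick's theorem. Each pairing produces a product of two-point functions of three kinds: $\langle\varphi(t_j)\varphi(t_{j'})\rangle$; the cross covariance $C^{\Phi}(t_j,\sigma_i,\theta_i,\bar\theta_i)=\langle\varphi(t_j)\Phi(\sigma_i,\theta_i,\bar\theta_i)\rangle$, which by Remark~\ref{remark_supersymmetry2} is supersymmetric in $(\sigma_i,\theta_i,\bar\theta_i)$ since $t_j\geq t_k\geq \sigma_i$; and $\langle\Phi(\sigma_i,\theta_i,\bar\theta_i)\Phi(\sigma_{i'},\theta_{i'},\bar\theta_{i'})\rangle$, including regularized self-contractions at $i=i'$ (which are the constants of Lemma~\ref{lemma_limit1}). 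For any pairing in which every $\Phi$-occurrence at the site $i$ contracts with some external $\varphi(t_{j_\alpha})$, the integrand $g_i\prod_\alpha C^{\Phi}(t_{j_\alpha},\sigma_i,\theta_i,\bar\theta_i)$ is a product of supersymmetric functions and hence supersymmetric in $(\sigma_i,\theta_i,\bar\theta_i)$. Theorem~\ref{theorem_supersymmetry1} then localizes the $(\sigma_i,\theta_i,\bar\theta_i)$-integration to $(t_k,0,0)$, producing the factor $-2g(t_k)\prod_\alpha\langle\varphi(t_{j_\alpha})\varphi(t_k)\rangle$. One such factor per site gives exactly the RHS Wick pairing in which the corresponding copies of $\varphi(t_k)$, extracted from $P(\varphi(t_k))^\ell$, pair with the same $\varphi(t_{j_\alpha})$'s with identical multiplicities.

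The remaining, and subtlest, case is that of $\Phi$-$\Phi$ contractions between two different sites, where the covariance carries a stray $\bar\theta$-term proportional to the partner's $\theta$, so the integrand fails to be supersymmetric. Here the Markovianity $\mathcal{G}(\sigma)\mathcal{G}(-\sigma)=0$ splits the symmetric integral over $(\sigma_i,\sigma_{i'})\in(-\infty,t_k]^{2}$ into the two ordering regions, in each of which only one of the two terms of $\langle\Phi_i\Phi_{i'}\rangle$ survives. The Berezin integrations over $(\theta_i,\bar\theta_i,\theta_{i'},\bar\theta_{i'})$ select only the $\theta_i\bar\theta_i\theta_{i'}\bar\theta_{i'}$-coefficient of $g_ig_{i'}\langle\Phi_i\Phi_{i'}\rangle$, to which the problematic $\bar\theta$-term does not contribute. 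A direct computation using the identity $\int_{-\infty}^{\tau}[g(\sigma)+g'(\sigma)/m^2]\mathcal{G}(\tau-\sigma)\,d\sigma=g(\tau)/m^2$ reduces the double integral to $(-2g(t_k))^{2}\langle\varphi(t_k)\varphi(t_k)\rangle$, which is exactly the contribution of a self-pairing of two $\varphi(t_k)$'s on the RHS. The analogous matching for general multi-site $\Phi$-$\Phi$ cycles, and for self-contractions within a single $P(\Phi_i)$, proceeds by the same mechanism, completing the term-by-term identification of LHS and RHS Wick pairings. The main obstacle is precisely this $\Phi$-$\Phi$ bookkeeping: Theorem~\ref{theorem_supersymmetry1} is not directly applicable, and the cancellation of the non-supersymmetric pieces relies on Markovianity together with the explicit integration identity above.
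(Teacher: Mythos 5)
Your overall strategy --- symmetrize the ordered integral into $\frac{1}{\ell!}J(t_k)^\ell$, expand by Wick's theorem, and match pairings one by one with the Wick pairings of $(-2g(t_k))^\ell\langle Y^M P(\varphi(t_k))^\ell\rangle$ --- is a legitimate alternative to the paper's proof, which instead runs an induction on $\ell$ peeling off the outermost time $\tau_1$. You also correctly identify the three mechanisms that make any such argument work: supersymmetry of $\langle\varphi(t_j)\Phi(\sigma_i,\theta_i,\bar\theta_i)\rangle$ for $t_j\geqslant\sigma_i$ (Remark~\ref{remark_supersymmetry2}), Markovianity $\mathcal{G}(\sigma)\mathcal{G}(-\sigma)=0$, and the symmetric coincident-point values of Lemma~\ref{lemma_limit1}. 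Your isolated-pair computation is correct (one can check that the two ordering regions each contribute $g(t_k)^2/m^2$, summing to $(-2g(t_k))^2\langle\varphi(t_k)^2\rangle$).

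The gap is in the sentence ``the analogous matching for general multi-site $\Phi$--$\Phi$ cycles, and for self-contractions within a single $P(\Phi_i)$, proceeds by the same mechanism.'' It does not, for two reasons. First, in a generic pairing a single site $i$ carries several contractions simultaneously (some to external $\varphi(t_j)$'s, some to other sites $i'$, some to itself), so the $(\sigma_i,\theta_i,\bar\theta_i)$-integrand is a \emph{product} of a supersymmetric piece and the non-supersymmetric cross-covariance $\langle\Phi_i\Phi_{i'}\rangle$; after you localize the inner variable of one cross-contraction to its partner's time, the surviving integrand at the partner site is no longer of the form (supersymmetric)$\times$(supersymmetric), so Theorem~\ref{theorem_supersymmetry1} cannot simply be iterated --- your two-site computation already shows this, since the last integral there has to be done by hand rather than by localization. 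Some ordering of the times, and hence some induction of the kind the paper sets up, is needed to control which term of $\langle\Phi_i\Phi_{i'}\rangle$ survives and in which variable the residual integrand is supersymmetric. Second, the same-site self-contractions inside $P(\Phi_i)$ are governed by a different cancellation altogether: the $\theta_i\bar\theta_i$-coefficient of the coincident-point super-covariance vanishes because $\langle\varphi(\tau)\omega(\tau)\rangle=+\tfrac12$ exactly cancels $\langle\psi(\tau)\bar\psi(\tau)\rangle=-\tfrac12$ (both values coming from the \emph{symmetric} regularization). This sign cancellation is the crux of the paper's argument (it is the displayed identity $\langle\varphi^{\ell n-2}\psi\bar\psi\,Y^M\rangle+\langle\varphi\omega\rangle\langle\varphi^{\ell n-2}Y^M\rangle=0$ in the paper's proof) and never appears in your write-up; without it the tadpole terms do not match the right-hand side. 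To close the argument along your lines you would need to state and prove the coincident-point cancellation explicitly and supply an ordered (inductive or recursive) procedure for localizing the sites one at a time --- at which point you have essentially reconstructed the paper's proof of Lemma~\ref{lemma_superprobability}.
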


\begin{proof}
  We prove the lemma on induction on $\ell$ and for simplicity we assume that
  $P (x) = x^n$, being the general case is a straightforward generalization.
  Since the proof is essentially of combinatorial nature in the following we
  consider some ill defined objects like the products $\varphi (t) \omega (s)$
  or $\psi (t) \bar{\psi} (s)$. This fact does not change the main idea of
  proof since all the expectations with respect to the previous products are
  defined using the symmetric regularization proposed in
  Lemma~\ref{lemma_limit1} and Lemma~\ref{lemma_limit2}, i.e. all the
  following computations can be made rigorous replacing $\varphi, \omega,
  \psi$ and $\bar{\psi}$ by the regularized Gaussian fields
  $\varphi_{\epsilon}, \omega_{\epsilon}, \psi_{\epsilon}$ and
  $\bar{\psi}_{\epsilon}$ (as defined in Lemma~\ref{lemma_limit1} and
  Lemma~\ref{lemma_limit2}) and then taking the limit as $\epsilon \rightarrow
  0$. The main difference between the proof below and the one involving the
  regularized fields is that in the regularized case we have also to consider
  the contractions of the form $\omega_{\varepsilon} (t) \varphi_{\varepsilon}
  (s)$ and $\psi_{\varepsilon} (t) \bar{\psi}_{\varepsilon} (s)$ when $s < t$
  and $| s - t | < \varepsilon$. Since the contribution of this kind of term
  is proportional to the support of the mollifier $\rho_{\varepsilon}$, they
  go to zero as $\varepsilon \rightarrow 0$. Let
  \[ Y^M (t_1, \ldots, t_k) \assign \prod_{j = 1}^k \varphi (t_j)^{m_j} \]
  We have
  \begin{multline*} \mathcal{H}^{M, G}_{1, x^n} (t_1, \ldots, t_k) = \left\langle Y^M (t_1,
     \ldots, t_k) \int_{- \infty}^{t_k} g (\tau + 2 \theta \bar{\theta}) (\Phi
     (\tau, \theta, \bar{\theta}))^n \mathd \tau \mathd \theta \mathd
     \bar{\theta} \right\rangle = \\
   = \int_{- \infty}^{t_k} g (\tau + 2 \theta \bar{\theta}) \langle Y^M
     (t_1, \ldots, t_k) (\Phi (\tau, \theta, \bar{\theta}))^n \rangle \mathd
     \tau \mathd \theta \mathd \bar{\theta} . \end{multline*}
  Since $\Phi$ and $\varphi$ are Gaussian fields, by Wick theorem and by
  Remark~\ref{remark_supersymmetry2}, we have that $\langle Y^M (t_1, \ldots,
  t_k) (\Phi (\tau, \theta, \bar{\theta}))^n \rangle$ is supersymmetric in
  $(\tau, \theta, \bar{\theta})$ when $\tau \leqslant t_k$. Moreover, given
  that $G = g (t + 2 \theta \bar{\theta})$ is a supersymmetric function by
  Remark~\ref{remark_supersymmetry}, we have the thesis.
  
  Suppose now that the lemma holds for $\ell - 1 \in \mathbb{N}$, letting
  \[ H (\tau_1) \assign \int_{- \infty}^{\tau_1} \cdots \int_{-
     \infty}^{\tau_{\ell}} \prod_{i = 2}^{\ell} g (\tau_i + 2 \theta_i
     \bar{\theta}_i) (\Phi (\tau_i, \theta_i, \bar{\theta}_i))^n \mathd \tau_i
     \mathd \theta_i \mathd \bar{\theta}_i, \]
  we have
  \begin{align*}
\mathcal{H}^{M, g}_{\ell, x^n}& (t_1, \ldots, t_k) =\\
=&\left\langle Y^M
     (t_1, \ldots, t_k)  \int_{- \infty}^{t_k} \int_{- \infty}^{\tau_1} \cdots
     \int_{- \infty}^{\tau_{\ell}} \prod_{i = 1}^{\ell} g \left( \tau_i + 2
     \theta_i \bar{\theta}_i \right) (\Phi (\tau_i, \theta_i,
     \bar{\theta}_i))^n \mathd \tau_i \mathd \theta_i \mathd \bar{\theta}_i
     \right\rangle \\
 = &\int_{- \infty}^{t_k} g (\tau_1 + 2 \theta_1 \bar{\theta}_1) \langle
     Y^M (t_1, \ldots, t_k) \Phi (\tau_1, \theta_1, \bar{\theta}_1)^n H
     (\tau_1) \rangle \mathd \tau_1 \mathd \theta_1 \mathd \bar{\theta}_1 \\
  = &\int_{- \infty}^{t_k} g' (\tau_1) \mathcal{H}^{(M, n), g}_{\ell - 1,
     x^n} (t_1, \ldots, t_k, \tau_1) \mathd \tau_1 -n \int_{- \infty}^{t_k}
     \langle Y^M (t_1, \ldots, t_k) \varphi (\tau_1)^{n - 1} \omega (\tau_1) H
     (\tau_1) \rangle \cdot \\
   & \cdot g (\tau_1) \mathd \tau_1- n
     (n - 1) \int_{- \infty}^{t_k} \langle Y^M (t_1, \ldots, t_k) \varphi
     (\tau_1)^{n - 2} \psi (\tau_1) \bar{\psi} (\tau_1) H (\tau_1) \rangle g (\tau_1) \mathd \tau_1 .
\end{align*}
  Where $(M, n) = (m_1, \ldots, m_k, n)$. By induction hypothesis the first
  term in the sum is exactly
  \[ \int_{- \infty}^{t_k} g' (\tau_1) \mathcal{H}^{(M, n), g}_{\ell - 1, x^n}
     (t_1, \ldots, t_k, \tau_1) \mathd \tau_1 = \int_{- \infty}^{t_k} g'
     (\tau_1) \frac{(2 g (\tau_1))^{\ell - 1}}{(\ell - 1) !} \langle \varphi
     (\tau_1)^{\ell n} Y^M (t_1, \ldots, t_k) \rangle \mathd \tau_1 . \]
  For the second term we note that
  \begin{multline*} \left\langle \varphi (\tau_1)^{n - 1} \omega (\tau_1) Y^M (t_1, \ldots,
     t_k) \int_{- \infty}^{\tau_1} \cdots \int_{- \infty}^{\tau_{\ell}}
     \prod_{i = 2}^{\ell} g (\tau_i + 2 \theta_i \bar{\theta}_i) (\Phi
     (\tau_i, \theta_i, \bar{\theta}_i))^n \mathd \tau_i \mathd \theta_i
     \mathd \bar{\theta}_i \right\rangle = \\
   = \sum_{j = 1}^k m_j \langle \omega (\tau_1) \varphi (t_j) \rangle
     \mathcal{H}^{(M - 1_j, n - 1), g}_{\ell - 1, x^n} (t_1, \ldots, t_k,
     \tau_1) + (n - 1) \langle \varphi (\tau_1) \omega (\tau_1) \rangle
     \mathcal{H}^{(M, n - 2), g}_{\ell - 1, x^n} (t_1, \ldots, t_k, \tau_1) \end{multline*}
  where $1_j = (0, \ldots, 1, 0, \ldots, 0) \in \mathbb{N}^k$ with $1$ in the
  $j$-th position and where we used Wick theorem and the fact that
  \[ \text{$\langle \varphi (\tau_1) \omega (\tau_1) \rangle =
     \frac{1}{2}$\quad and $\left\langle \omega (\tau_1) \int_{-
     \infty}^{\tau_1} \cdots \int_{- \infty}^{\tau_{\ell}} \prod_{i =
     2}^{\ell} g (\tau_i + 2 \theta_i \bar{\theta}_i) (\Phi (\tau_i, \theta_i,
     \bar{\theta}_i))^n \mathd \tau_i \mathd \theta_i \mathd \bar{\theta}_i
     \right\rangle = 0$} . \]
  Furthermore for the third we have
  \[ \left\langle \varphi (\tau_1)^{n - 2} \psi (\tau_1) \bar{\psi} (\tau_1)
     \prod_{j = 1}^k \varphi (t_j)^{m_j} \int_{- \infty}^{\tau_1} \cdots
     \int_{- \infty}^{\tau_{\ell}} \prod_{i = 2}^{\ell} g (\tau_i + 2 \theta_i
     \bar{\theta}_i) (\Phi (\tau_i, \theta_i, \bar{\theta}_i))^n \mathd \tau_i
     \mathd \theta_i \mathd \bar{\theta}_i \right\rangle = \]
  \[ = \langle \psi (\tau_1) \bar{\psi} (\tau_1) \rangle \mathcal{H}^{(M, n -
     2), g}_{\ell - 1, x^n} (t_1, \ldots, t_k, \tau_1) . \]
  In this way we obtain that
  \[ \mathcal{H}^{M, g}_{\ell, x^n} (t_1, \ldots, t_k) = (- 1)^{\ell - 1}
     2^{\ell - 1} \int_{- \infty}^{t_k} g' (\tau_1) \frac{(g (\tau_1))^{\ell -
     1}}{(\ell - 1) !} \langle \varphi (\tau_1)^{\ell n} Y^M (t_1, \ldots,
     t_k) \rangle \mathd \tau_1 + \]
  \[ - \sum_{j = 1}^k m_j \langle \omega (\tau_1) \varphi (t_j) \rangle \cdot
     \mathcal{H}^{(M - 1_j, n - 1), g}_{\ell - 1, x^n} (t_1, \ldots, t_k,
     \tau_1) . \]
  Here we use the fact that $\langle \varphi (\tau_1) \omega (\tau_1) \rangle
  = - \langle \psi (\tau_1) \bar{\psi} (\tau_1) \rangle = \frac{1}{2}$. Noting
  that
  \[ \langle \varphi^{\ell n - 2} (\tau) \psi (\tau) \bar{\psi} (\tau) Y^M
     (t_1, \ldots, t_k) \rangle + \langle \varphi (\tau) \omega (\tau) \rangle
     \langle \varphi^{\ell n - 2} (\tau) Y^M (t_1, \ldots, t_k) \rangle = 0 \]
  we obtain
  \[ \mathcal{H}^{M, g}_{\ell, x^n} (t_1, \ldots, t_k) = (- 2)^{\ell - 1}
     \left\langle Y^M (t_1, \ldots, t_k) \int_{- \infty}^{t_k} \frac{(g (\tau
     + 2 \theta \bar{\theta}))^{\ell}}{\ell !} \Phi^{n \ell} (\tau, \theta,
     \bar{\theta}) \mathd \tau \mathd \theta \mathd \bar{\theta} \right\rangle
     = \]
  \[ = \frac{(- 2)^{\ell - 1}}{\ell !} \mathcal{H}^{M, g^{\ell}}_{1, x^{n
     \ell}} (t_1, \ldots, t_k) \]
  Finally, the thesis follows from the induction hypothesis for
  $\mathcal{H}^{M, g^{\ell}}_{1, x^{n \ell}} (t_1, \ldots, t_k)$.
\end{proof}

\begin{corollary}
  \label{corollary_supersymmetry}Let $G$ be a supersymmetric functions with
  compact support then we have
  \begin{equation}
    \left\langle \varphi (0)^m \left( \int_{- \infty}^0 G (t, \theta,
    \bar{\theta}) P (\Phi (t, \theta, \bar{\theta})) \mathd t \mathd \theta
    \mathd \bar{\theta} \right)^k \right\rangle = (- 2 G_{\emptyset} (0))^k
    \langle \varphi (0)^m P (\varphi (0))^k \rangle . \label{eq:cor-13}
  \end{equation}
\end{corollary}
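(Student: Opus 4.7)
The plan is to deduce the corollary directly from Lemma~\ref{lemma_superprobability} by unfolding the $k$-th power of the integral into $k!$ times a time-ordered iterated integral and then specializing the lemma to a single external time.

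First, I would use the characterization of supersymmetric functions from Section~3.1: since $G$ is supersymmetric with compact support, it must take the form $G(t,\theta,\bar{\theta}) = g(t+2\theta\bar{\theta}) = G_{\emptyset}(t) + 2 G_{\emptyset}'(t)\theta\bar{\theta}$ with $g = G_{\emptyset} \in C^{\infty}_c(\mathbb{R})$. This puts the integrand in exactly the shape required by the lemma.

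Second, I would expand the $k$-th power by introducing $k$ independent pairs of Grassmann dummy variables $(\theta_i,\bar{\theta}_i)$, yielding a product of $k$ integrals over $\tau_i \in (-\infty,0]$. The crucial algebraic point is that each factor $G(\tau_i,\theta_i,\bar{\theta}_i) P(\Phi(\tau_i,\theta_i,\bar{\theta}_i))$ is Grassmann-even: the $\psi(\tau_i)\theta_i$ and $\bar{\psi}(\tau_i)\bar{\theta}_i$ contributions appear either in pairs or packaged with $\theta_i\bar{\theta}_i$. Using the anticommutation rules \eqref{eq:commutator1}--\eqref{eq:commutator2} for $\psi,\bar{\psi}$ at different times together with the anticommutativity of distinct Grassmann generators, two such factors commute in $\mathcal{A}$, so the $k$-th power equals $k!$ times the strictly ordered integral
\[
 k!\int_{-\infty}^{0}\int_{-\infty}^{\tau_1}\cdots\int_{-\infty}^{\tau_{k-1}} \prod_{i=1}^{k} G(\tau_i,\theta_i,\bar{\theta}_i) P(\Phi(\tau_i,\theta_i,\bar{\theta}_i))\,\mathd\tau_i\,\mathd\theta_i\,\mathd\bar{\theta}_i .
\]
Third, I would invoke Lemma~\ref{lemma_superprobability} with the lemma's $k$ set to $1$, $t_1 = 0$, $m_1 = m$, and the lemma's $\ell$ set to the corollary's $k$. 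The lemma then identifies the ordered integral with $(k!)^{-1}(-2G_{\emptyset}(0))^k\langle \varphi(0)^m P(\varphi(0))^k\rangle$, and multiplying by the combinatorial factor $k!$ from the unfolding yields the claim \eqref{eq:cor-13}.

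The step I expect to be the main obstacle is the commutativity/symmetrization argument: one has to verify carefully that, despite the noncommutative nature of $\mathcal{A}$, the factors $G(\tau_i,\theta_i,\bar{\theta}_i)P(\Phi(\tau_i,\theta_i,\bar{\theta}_i))$ at different $\tau_i$ do commute, so that the naive expansion-symmetrization of the $k$-th power into an ordered iterated integral is legitimate. This is a parity-tracking argument using the fresh Grassmann dummies and the (anti)commutation relations; once it is in place, the remainder is a direct application of the already proved lemma.
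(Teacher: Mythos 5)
Your proposal is correct and follows essentially the same route as the paper: symmetrize the $k$-th power into $k!$ times the time-ordered iterated integral and then apply Lemma~\ref{lemma_superprobability} with $t_1=0$, $m_1=m$, $\ell=k$ and $g=G_{\emptyset}$, the $k!$ cancelling against the $1/\ell!$ in the lemma. Your explicit parity-tracking justification of the symmetrization (each factor being Grassmann-even, hence commuting in $\mathcal{A}$) is a detail the paper leaves implicit under the phrase ``symmetry of the l.h.s.''
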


\begin{proof}
  Using the symmetry of the l.h.s. of~{\eqref{eq:cor-13}} with respect to the
  exchanges $(\tau_i, \theta_i, \bar{\theta}_i) \longleftrightarrow (\tau_j,
  \theta_j, \bar{\theta}_j)$ we have that
  \[ \left\langle \varphi (0)^m \left( \int_{- \infty}^0 G (t, \theta,
     \bar{\theta}) P (\Phi (\tau, \theta, \bar{\theta})) \mathd t \mathd
     \theta \mathd \bar{\theta} \right)^k \right\rangle = \]
  \[ = k! \left\langle \varphi (0)^m \int_{- \infty}^0 \int_{-
     \infty}^{\tau_1} \cdots \int_{- \infty}^{\tau_{k - 1}} \prod_{i = 1}^k G
     (\tau_i, \theta_i, \bar{\theta}_i) P (\Phi (\tau_i, \theta_i,
     \bar{\theta}_i)) \mathd \tau_i \mathd \theta_i \mathd \bar{\theta}_i
     \right\rangle . \]
  Then the claim follows directly from Lemma~\ref{lemma_superprobability}
  taking $g = G_{\emptyset}$.
\end{proof}

\begin{theorem}
  \label{theorem_localization}Let $F$ be a smooth bounded function, let $G$ be
  a supersymmetric function with compact support, let $H$ be a bounded
  function with all the derivatives bounded and let $\mathbb{K}$ be an entire
  function then we have
  \[ \left\langle F (\varphi (0)) \mathbb{K} \left( \int_{- \infty}^0 G (t,
     \theta, \bar{\theta}) H (\Phi (t, \theta, \bar{\theta})) \mathd t \mathd
     \theta \mathd \bar{\theta} \right) \right\rangle = \langle F (\varphi
     (0)) \cdot \mathbb{K} (- 2 G_{\emptyset} (0) \cdot \varphi (0)) \rangle .
  \]
\end{theorem}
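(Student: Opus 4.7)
The plan is to reduce the theorem to Corollary \ref{corollary_supersymmetry} by three approximations performed in sequence: expanding $\mathbb{K}$ as a power series in its argument, approximating $F$ by polynomials in $\varphi(0)$, and approximating $H$ by polynomials. Throughout I work with the $\rho_\epsilon$-regularized field $\Phi_\epsilon$ and pass to the limit $\epsilon\to 0$ at the end, relying on the uniform-in-$\epsilon$ estimates that underlie Theorem \ref{theorem_limit1}.

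First, write $\mathbb{K}(z)=\sum_{k\ge 0} a_k z^k$. The uniform $L^p$-bound on $\langle|\langle (\int G H(\Phi_\epsilon))^k\rangle_{\psi,\bar\psi}|^p\rangle$ used in Theorem \ref{theorem_limit1} gives enough summability to interchange the series with $\langle\cdot\rangle$, so it suffices to prove, for every $k\in\mathbb{N}$,
\[
\left\langle F(\varphi(0))\left(\int_{-\infty}^0 G(t,\theta,\bar\theta)H(\Phi(t,\theta,\bar\theta))\,\mathd t\,\mathd\theta\,\mathd\bar\theta\right)^{\!k}\right\rangle=(-2G_\emptyset(0))^{k}\,\langle F(\varphi(0))\,H(\varphi(0))^{k}\rangle,
\]
and then resum over $k$ to obtain the right-hand side of the theorem.

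Second, since $\varphi(0)$ is a centred Gaussian of variance $1/(2m^2)$, polynomials in $\varphi(0)$ are dense in $L^2(\mu_\varphi)$. Using boundedness of $F$ and $H$ together with the already available moment bounds, dominated convergence reduces the task to proving the $k$-th identity with $F(\varphi(0))$ replaced by a monomial $\varphi(0)^m$. When $H$ is a polynomial $P$, this is exactly Corollary \ref{corollary_supersymmetry} with that $P$. For a general bounded $H$ with all derivatives bounded I approximate $H$ by polynomials $H_n$ (truncated Taylor expansions multiplied by a smooth cutoff matched to the Gaussian tails of $\varphi$) and pass to the limit; the chain-rule expansion
\[
H(\Phi)=H(\varphi)+H'(\varphi)\bar\psi\theta+H'(\varphi)\psi\bar\theta+\bigl(H'(\varphi)\omega+H''(\varphi)\psi\bar\psi\bigr)\theta\bar\theta,
\]
combined with Lemma \ref{lemma_limit1} and Lemma \ref{lemma_limit2}, gives convergence $\int GH_n(\Phi)\to\int GH(\Phi)$ in every $L^p$ of the super-expectation, uniformly in $n$.

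The main obstacle I anticipate is the compatibility of the polynomial approximation of $H$ with raising to the $k$-th power inside the super-expectation, since sup-norm convergence is unstable under exponentiation. I sidestep this by proving the identity at each fixed $k$ separately, where only finitely many derivatives of $H_n$ intervene and the limit $n\to\infty$ can be taken by dominated convergence using the uniform boundedness of $H_n,H_n',H_n''$. Only after this is the sum over $k$ reinstated, using the $k!$-type growth control inherent in the proof of Theorem \ref{theorem_limit1} to justify the exchange of the infinite series with the expectation.
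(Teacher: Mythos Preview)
Your strategy matches the paper's proof: establish the $k$-th power identity for general $F,H$ by polynomial approximation (reducing to Corollary~\ref{corollary_supersymmetry}), then expand $\mathbb{K}$ as a power series and exchange the sum with $\langle\cdot\rangle$ via the uniform $L^p$ bounds underlying Theorem~\ref{theorem_limit1}. One small slip worth noting: polynomial approximants $H_n$ cannot be ``uniformly bounded'' in sup-norm (nor can a Taylor polynomial times a cutoff be a polynomial), so your dominated-convergence step should instead be phrased in terms of convergence in the Gaussian-weighted Sobolev topology---this is exactly the topology the paper invokes, and it is what the Gaussian moment bounds on $\varphi_\epsilon,\omega_\epsilon,\psi_\epsilon,\bar\psi_\epsilon$ actually deliver.
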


\begin{proof}
  Using the density of polynomial in the set of smooth function with respect
  the topology given by the one of the Sobolev space with respect the Gaussian
  law of $\varphi (t)$, Corollary~\ref{corollary_supersymmetry} implies that
  for any $k \in \mathbb{N}$ and $F, G, H$ satisfying the hypothesis of the
  theorem
  \[ \left\langle F (\varphi (0)) \left( \int_{- \infty}^0 G (t, \theta,
     \bar{\theta}) H (\Phi (t, \theta, \bar{\theta})) \mathd t \mathd \theta
     \mathd \bar{\theta} \right)^k \right\rangle = \langle F (\varphi (0)) [-
     2 G_{\emptyset} (0) H (\varphi (0))]^k \rangle . \]
  Expanding $\mathbb{K}$ in power series and exploiting the fact that in power
  series and since
  \[ \left\langle \left| \left\langle \left( \int G (t, \theta, \bar{\theta})
     H (\Phi (t, \theta, \bar{\theta})) \mathd t \mathd \theta \mathd
     \bar{\theta} \right)^k \right\rangle_{\psi, \bar{\psi}} \right|^p
     \right\rangle \]
  is uniformly bounded when $H$ is bounded and for any $p > 1$ we can exchange
  the series with the expectation $\langle \cdot \rangle$, obtaining in this
  way the thesis.
\end{proof}

\bibliographystyle{plain}
\bibliography{langevin-supersymmetry}

\end{document}